\newtheorem{theorem}{\sc Theorem}[section]
\newtheorem{lemma}[theorem]{\sc Lemma}
\newtheorem{proposition}[theorem]{\sc Proposition}
\begin{document}
\title[Commutators]
{On profinite groups in which commutators are covered by finitely many subgroups} 
\author[C. Acciarri]{Cristina Acciarri}

\address{%
Department of Mathematics\\ University of Brasilia\\
    Brasilia - DF, Brazil 
}

\email{acciarricristina@yahoo.it}

\author{Pavel Shumyatsky}
\address{ Department of Mathematics\\ University of Brasilia\\
    Brasilia - DF, Brazil }
\email{pavel@mat.unb.br}

\thanks{Supported by CAPES and CNPq-Brazil}

\keywords{Profinite groups, coverings, verbal subgroups, commutators}
\subjclass[2010]{20E18, 20F50}

\begin{abstract}
For a family of group words $w$ we show that if $G$ is a profinite group in which all $w$-values are contained in a union of finitely many subgroups with a prescribed property, then $w(G)$ has the same property as well. In particular, we show this in the case where the subgroups are periodic or of finite rank. If $G$ contains finitely many subgroups $G_1,G_2,\dots,G_s$ of finite exponent $e$ whose union contains all $\gamma_k$-values in $G$, it is shown that $\gamma_k(G)$ has finite $(e,k,s)$-bounded exponent. If $G$ contains finitely many subgroups $G_1,G_2,\dots,G_s$ of finite rank $r$ whose union contains all $\gamma_k$-values, it is shown that $\gamma_k(G)$ has finite $(k,r,s)$-bounded rank.

\end{abstract}
\maketitle

\section{Introduction}

A  covering of a group $G$ is a family $\{S_i\}_{i\in I}$ of subsets of
$G$ such that $G=\bigcup_{i\in I}\,S_i$.
If $\{H_i\}_{i\in I}$ is a covering of $G$ by subgroups, it is natural to ask
what information about $G$ can be deduced from properties of the subgroups $H_i$. In the case where the covering is finite actually quite a lot about the structure of $G$ can be said. The first result in this direction is due to Baer (see \cite{Neu1}), who proved that $G$ admits a finite covering by abelian subgroups if and only if it is central-by-finite. The nontrivial part of this assertion is an immediate consequence of a subsequent result of B.H. Neumann \cite{Neu2}: if $\{S_i\}$ is a finite covering of $G$ by cosets of subgroups, then $G$ is also covered by the cosets $S_i$ corresponding to subgroups of finite index in $G$. In other words, we can get rid of the cosets of subgroups of infinite index without losing the covering property.

Given a group word $w=w(x_1,\dots,x_n)$, we think of it primarily as a function of $n$ variables defined on any given group $G$. We denote by $w(G)$ the verbal subgroup of $G$ generated by the values of $w$. If the set of all $w$-values in a group $G$ can be covered by finitely many subgroups, one could hope to get some information about the structure of the verbal subgroup $w(G)$.

In this direction we mention the following result that was obtained in \cite{roga}. Let $w$ be either the lower central word $\gamma_k$ or the derived word $\delta_k$. Suppose that $G$ is a group in which  all  $w$-values are contained in a union of finitely many Chernikov subgroups. Then $w(G)$ is Chernikov.

Another result of this nature was established in \cite{wigustavo}: If $G$ is a group in which  all commutators are contained in a union of finitely many cyclic
subgroups, then $G'$ is either cyclic or finite.

In the present paper we deal with profinite groups in which all $w$-values are contained in a union of finitely many subgroups with certain prescribed properties.  A profinite group is a topological group that is isomorphic to an inverse limit of finite groups. The textbooks \cite{riza} and \cite{wilson} provide a good introduction to the theory of profinite groups. In the context of profinite groups all the usual concepts of group theory are interpreted topologically. In particular, by a subgroup of a profinite group we mean a closed subgroup. A subgroup is said to be generated by a set $S$ if it is topologically generated by $S$.

The words $w$ considered in this paper are the so-called outer (multilinear) commutator words. Recall that an outer commutator word is a word which is obtained by nesting commutators but using always different variables. A word of this kind has a form of a multilinear Lie monomial. For example the word $[[x_1,x_2],[y_1,y_2,y_5],z]$ is outer commutator while the Engel word $[x_1,x_2,x_2,x_2]$ is not. The word $w(x)=x$ in one variable is an outer commutator word of weight $1$; if $u$ and $v$  are defined outer commutator words of weights $m$ and $n$ respectively then, the word $w=[u,v]$ is an outer commutator word of weight $m+n$. An important family of outer commutator words are the lower central words $\gamma_k$,
given by
\[
\gamma_1=x_1,
\qquad
\gamma_k=[\gamma_{k-1},x_k]=[x_1,\ldots,x_k],
\quad
\text{for $k\ge 2$.}
\]
The corresponding verbal subgroups $\gamma_k(G)$ are the terms of the lower central series of $G$. Another distinguished sequence of outer commutator words are the derived words $\delta_k$, on $2^k$ variables, which are defined recursively by
\[
\delta_0=x_1,
\quad
\delta_k=[\delta_{k-1}(x_1,\ldots,x_{2^{k-1}}),\delta_{k-1}(x_{2^{k-1}+1},\ldots,x_{2^k})],
\quad
\text{for $k\ge 1$.}
\]
The verbal subgroup that corresponds to the word $\delta_k$ is the familiar $k$th derived subgroup of $G$ usually denoted by $G^{(k)}$.

In the next section we consider profinite groups $G$ having finitely many periodic subgroups $G_1,G_2,\dots,G_s$ whose union contains all $w$-values in $G$ for some outer commutator word $w$. Recall that a group is periodic (torsion) if every element of the group has finite order. A group is called locally finite if each of its finitely generated subgroups is finite. It is immediate from the definitions that every locally finite group is periodic. However the converse in general is not true -- there exist finitely generated infinite periodic groups (see for example Gupta's essay \cite{gupta}).  Periodic profinite groups have received a good deal of attention in the past. In particular, using Wilson's reduction theorem \cite{wil}, Zelmanov has been able to prove local finiteness of periodic compact groups \cite{z}. Earlier Herfort showed that there exist only finitely many primes dividing the orders of elements of a periodic profinite group \cite{he}. It is a long-standing problem whether any periodic profinite group has finite exponent. Recall that a group is said to be of exponent $e$ if $x^e=1$ for all $x\in G$ and $e$ is the least positive integer with that property.

The following theorem is the main result of the next section.

\begin{theorem}\label{222} Let $w$ be an outer commutator word and $G$ a profinite group that has finitely many periodic subgroups $G_1,G_2,\dots,G_s$ whose union contains all $w$-values in $G$. Then $w(G)$ is locally finite.
\end{theorem}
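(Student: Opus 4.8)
The plan is to reduce the statement to the periodicity of $w(G)$ and then appeal to Zelmanov's theorem. Since $w(G)$ is by definition a closed subgroup of $G$, it is itself profinite; and a periodic profinite group is locally finite by \cite{z}. Hence it suffices to prove that $w(G)$ is periodic. Two facts about the covering will be used throughout. First, each $G_i$ is closed and periodic, so it is a periodic profinite group and therefore locally finite by \cite{z}. Second, by Herfort's theorem \cite{he} only finitely many primes divide the orders of the elements of $G_1\cup\cdots\cup G_s$; writing $\pi$ for this finite set, every $w$-value is a $\pi$-element of finite order. The case $s=1$ is immediate: if all $w$-values lie in the single closed subgroup $G_1$, then $w(G)=\overline{\langle G_w\rangle}\le G_1$, which is periodic. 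So the whole difficulty lies in the passage from finitely many subgroups to one.

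To localize the covering I would use a Baire category argument. If $n$ is the number of variables of $w$, the set $G_w$ of $w$-values is the image of the continuous map $w\colon G^n\to G$, hence compact, and the preimages $w^{-1}(G_i)$ are closed and cover the compact (hence Baire) space $G^n$. Therefore one of them, say $w^{-1}(G_{i_0})$, has nonempty interior, which means that there exist elements $a_1,\dots,a_n\in G$ and an open normal subgroup $N$ of $G$ such that
\[
w(a_1N,\,a_2N,\,\dots,\,a_nN)\subseteq G_{i_0}.
\]
Thus on a full coset-tuple the word $w$ takes all its values inside a single locally finite subgroup.

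The heart of the proof is to convert this local information into global control of $w(G)$, and this is the step I expect to be the main obstacle. Here the hypothesis that $w$ is an outer (multilinear) commutator word is essential: such words satisfy identities of the form $w(\dots,xy,\dots)\equiv w(\dots,x,\dots)\,w(\dots,y,\dots)$ modulo commutators of higher weight, which allow one to relate the values of $w$ on the coset-tuple $(a_1N,\dots,a_nN)$ to the values of $w$ with entries in $N$ and to conjugates of the latter. I would exploit these identities, together with Herfort's finite set of primes $\pi$ to keep the torsion under control, to show that after replacing $G$ by a suitable open subgroup the relevant $w$-values are covered by fewer than $s$ of the subgroups $G_i$ (or their conjugates). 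An induction on $s$, combined with an inverse-limit/compactness argument to pass between $G$ and its finite quotients, then reduces matters to the base case and yields that $w(G)$ is periodic. A final application of Zelmanov's theorem \cite{z} upgrades periodicity to local finiteness, completing the proof.
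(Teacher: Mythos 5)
Your setup is sound and your Baire category localization coincides with the paper's: the preimages $w^{-1}(G_i)$ (the paper uses $\delta_k^{-1}(G_i)$ for $k$ the weight of $w$, via the fact that every $\delta_k$-value is a $w$-value) are closed and cover $G^n$, so some coset tuple $(a_1N,\dots,a_nN)$ has all its $w$-values inside a single periodic subgroup. But the step you yourself flag as ``the main obstacle'' is where the proof actually lives, and your plan for it has a genuine gap. You propose to use the quasi-multilinearity of outer commutator words to show that, after shrinking $G$ to an open subgroup, the $w$-values are covered by fewer than $s$ of the $G_i$, and then to induct on $s$. There is no mechanism on offer for this reduction: the identities $w(\dots,xy,\dots)\equiv w(\dots,x,\dots)w(\dots,y,\dots)$ (mod higher-weight terms) express a $w$-value as a \emph{product} of other $w$-values, and products of $w$-values need not be $w$-values, hence need not lie in $\bigcup_i G_i$ at all; so knowing that values on one coset tuple land in $G_{i_0}$ does not let you discard $G_{i_0}$ from the covering of the remaining values. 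The base case $s=1$ is trivial, but the inductive step is exactly the unproved content.

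The paper takes a structurally different route out of the Baire step, and it is worth seeing why. Let $H$ be the open normal subgroup and $K$ the subgroup generated by all $\delta_k(a_1h_1,\dots,a_{2^k}h_{2^k})$ with $h_i\in H$; then $K\le G_j$ is locally finite. After factoring out the (locally finite) product of the finitely many conjugates of $K\cap H$ and then the finite normal closure of $K$, one may assume the law $\delta_k\equiv 1$ holds on the cosets $a_1H,\dots,a_{2^k}H$. A commutator-manipulation lemma (Lemma \ref{22}, resting on Lemma \ref{111}) then shows that this coset law forces $H$ itself to be \emph{soluble} of derived length at most $k$. This is the key structural payoff that your sketch has no analogue of. The soluble-by-finite case is then handled by a separate induction on the derived length (Proposition \ref{soluble_local finit}), using the marginal-subgroup observation that a normal subgroup with no nontrivial $w$-values centralizes $w(G)$, together with a Schur-type theorem of Mann to control $w(G)'$. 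If you want to salvage your outline, the missing idea to supply is precisely this passage from ``the word vanishes on a coset tuple of an open subgroup'' to ``that open subgroup is soluble,'' rather than an induction on the number of covering subgroups.
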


It follows from the proof that if under the hypothesis of the above theorem the subgroups $G_1,G_2,\dots,G_s$ have finite exponent, then $w(G)$ has finite exponent as well. In Section 3 we address the question whether the exponent of $w(G)$ is bounded in terms of the exponents of $G_1,G_2,\dots,G_s$ and $s$. Using the Lie-theoretic techniques that Zelmanov created in his solution of the restricted Burnside problem \cite{zel}, we obtained the following related result.

\begin{theorem}\label{333} Let $e,k,s$ be positive integers and $G$ a profinite group that has  subgroups $G_1,G_2,\dots,G_s$ whose union contains all $\gamma_k$-values in $G$. Suppose that each of the subgroups $G_1,G_2,\dots,G_s$ has finite exponent dividing $e$. Then $\gamma_k(G)$ has finite $(e,k,s)$-bounded exponent.
\end{theorem}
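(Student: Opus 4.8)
The plan is to extract from the covering hypothesis a pointwise bound on the orders of $\gamma_k$-values, to descend to finite quotients, to reduce to finite $p$-groups, and then to treat the $p$-group case with Lie-algebra machinery. First I would observe that the covering is used only to bound orders: every $\gamma_k$-value lies in some $G_i$, and $\exp G_i$ divides $e$, so every $\gamma_k$-value of $G$ has order dividing $e$. Thus $s$ becomes irrelevant and it suffices to prove that, for a profinite group $G$ in which all $\gamma_k$-values have order dividing $e$, the subgroup $\gamma_k(G)$ has $(e,k)$-bounded exponent. This hypothesis is inherited by every continuous finite quotient $G/N$, since the image of a $\gamma_k$-value is again a $\gamma_k$-value, and $\gamma_k(G)=\varprojlim_N \gamma_k(G/N)$; hence a bound $B=B(e,k)$ on $\exp\gamma_k(G/N)$ valid for all $N$ forces $\exp\gamma_k(G)$ to divide $B$, and the problem is reduced to finite groups.

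Working inside a finite group $G$, I would next reduce to finite $p$-groups, one prime at a time. By Theorem~\ref{222} and the remark following it we already know that $\gamma_k(G)$ is locally finite and of finite exponent; the new content is to make the bound depend only on $e$ and $k$. Since $\gamma_k(G)$ is generated by $\gamma_k$-values of order dividing $e$, the primes occurring in $\gamma_k(G)$ divide $e$ (a point I return to below), so only boundedly many primes are involved. Fixing such a prime $p$ and factoring out the normal subgroup $O_{p'}(\gamma_k(G))$, the task becomes to bound the exponent of the Sylow $p$-subgroup, and standard arguments bring one to the situation where $G$ is a finite $p$-group all of whose $\gamma_k$-values have order dividing $p^a$, with $p^a$ dividing $e$; the bound must then be uniform in $p$.

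The heart of the matter is this $p$-group case, and here I would invoke Zelmanov's methods. Associate with $G$ the restricted Lie algebra $L=\bigoplus_i D_i/D_{i+1}$ over the field of $p$ elements, built from the Zassenhaus--Jennings--Lazard dimension series $\{D_i(G)\}$; it is generated by its first homogeneous component. The condition that the group $\gamma_k$-values have order dividing $p^a$ translates, through the standard relations between the series $\{D_i\}$ and the $p$-power operation of $L$, into ad-nilpotency of $(a,k)$-bounded index of the Lie commutators of weight at least $k$ in the homogeneous generators. Granted that $L$ satisfies a polynomial identity, Zelmanov's nilpotency theorem then yields that the verbal ideal $\gamma_k(L)$ is nilpotent of $(a,k)$-bounded class, and the properties of the dimension series translate this back into the statement that $\gamma_k(G)$ is nilpotent of $(a,k)$-bounded class. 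Being nilpotent of bounded class and generated by elements of order dividing $p^a$, the subgroup $\gamma_k(G)$ then has $(a,k)$-bounded exponent, and assembling the bounds over the finitely many primes dividing $e$ gives the required $(e,k)$-bounded exponent.

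The main obstacle is precisely this $p$-group step: one must produce the ad-nilpotency of the relevant Lie commutators with an index depending only on $a$ and $k$ (and not on $G$ or $p$) and, above all, secure the polynomial identity needed to apply Zelmanov's theorem. This last ingredient is the genuinely delicate one and is where the techniques from the solution of the restricted Burnside problem enter. A secondary difficulty is the clean control, uniform in $p$, of the reduction to $p$-groups, in particular the verification that no primes outside $\pi(e)$ can occur in $\gamma_k(G)$, which is what keeps the final bound a function of $e$ and $k$ alone.
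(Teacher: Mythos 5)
Your opening reduction is where the argument breaks down. You discard the covering and retain only its consequence that every $\gamma_k$-value has order dividing $e$, asserting that $s$ ``becomes irrelevant'' and that it suffices to show: in a finite group all of whose $\gamma_k$-values have order dividing $e$, the exponent of $\gamma_k(G)$ is $(e,k)$-bounded. That statement is strictly stronger than Theorem \ref{333} and is not available: already for $k=2$ it is the question of whether $\exp G'$ is $e$-bounded when all commutators have order dividing $e$, which Zelmanov-type methods answer only under additional hypotheses (a bound on the number of generators, or $e$ a prime power), and nothing in your sketch supplies a proof. Indeed the two points you yourself flag as delicate are exactly where the discarded hypothesis is needed: (i) securing a polynomial identity for the Lie algebra $L$, and (ii) upgrading Zelmanov's nilpotency theorem to a class bounded in terms of $a$ and $k$ alone. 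In every known application both come from a bound on the number of generators of the group, hence of $L$; your $L$ is generated by the entire first dimension quotient, so you get neither a PI with controlled parameters nor a bounded class. The $p$-group step is therefore not a technical verification but the whole (open) problem.

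The paper keeps $s$ precisely to manufacture the missing bounded generation. After the same preliminary reductions you perform (inverse limit, then Sylow subgroups via Theorem \ref{752}, then finite $p$-groups), it replaces each $G_i$ by the subgroup generated by the $\gamma_k$-values it contains and invokes Lemma \ref{751}: the set of $\gamma_k$-values is commutator-closed, so in the nilpotent group $G$ one gets the product decomposition $\gamma_k(G)=G_1G_2\cdots G_s$. Consequently every $g\in\gamma_k(G)$ factors as $g_1\cdots g_s$ with $g_i\in G_i$, and $R=\langle g_1,\dots,g_s\rangle$ is an $s$-generated group whose generators and whose $\gamma_k$-values all have order dividing $e$; Theorem 7 of \cite{novar}, which is where Zelmanov's Lie-theoretic machinery is actually applied, bounds $|R|$ and hence the order of $g$ in terms of $e$, $k$ and $s$. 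To repair your outline you would have to reinstate the covering at the point where bounded generation is required; the product decomposition of Lemma \ref{751} is the mechanism that does this.
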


In Section 4 we study the case where all $w$-values are contained in a union of finitely many subgroups of finite rank. A group $G$ is said to be of finite rank $r$ if every finitely generated subgroup of $G$ can be generated by $r$ elements. The arguments used in the proofs of Theorems \ref{222} and \ref{333} turned out to be useful for the case of finite rank. Thus, we obtain the following theorems.

\begin{theorem}\label{444} Let $w$ be an outer commutator word and $G$ a profinite group that has finitely many subgroups $G_1,G_2,\dots,G_s$ whose union contains all $w$-values in $G$. If each of the subgroups  $G_{1},G_{2},\ldots,G_{s}$ is of finite rank, then $w(G)$ has finite rank as well.
\end{theorem}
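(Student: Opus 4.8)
\emph{The plan} is to argue by induction on the weight of the outer commutator word $w$, reducing in each case to a bound on the Sylow pro-$p$ subgroups of $w(G)$ and exploiting the compactness of $G$ exactly as in the proofs of Theorems \ref{222} and \ref{333}. First I record the preliminary reductions. A closed subgroup of a finite rank group again has finite rank, so the hypothesis passes to closed subgroups; and finite rank is insensitive to finite-index extensions, since if $K\le G$ is open of index $m$ with $\mathrm{rk}(K)\le r$, then every closed $H\le G$ has $H\cap K$ open of index $\le m$ and of rank $\le r$, whence $d(H)\le r+m$. Moreover, to prove that $w(G)$ has finite rank it suffices to bound the ranks of its Sylow pro-$p$ subgroups by a constant independent of $p$, since for finite groups the rank is controlled by the maximum of the ranks of the Sylow subgroups (a consequence of the classification of finite simple groups). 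This is the statement I will establish for $w(G)$.

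For the base case $w=x$ the set of $w$-values is all of $G$, so $G=G_{1}\cup\dots\cup G_{s}$. This set is closed, being the continuous image of the compact space $G^{n}$, and the Baire category theorem applied to the compact group $G$ yields that some $G_{i}$ has nonempty interior, hence is open of finite index; being of finite rank it forces $G=w(G)$ to have finite rank by the extension bound above. For the inductive step write $w=[u,v]$ with $u,v$ outer commutator words of smaller weight. Since the set of $w$-values is conjugation-invariant, the subgroup it generates is normal and equals the normal closure of the commutators of $u$-values with $v$-values, so that $w(G)=[u(G),v(G)]$. The set $G_{w}$ of $w$-values is again closed and contained in $\bigcup_{i}G_{i}$; applying Baire category inside the compact space $G_{w}$ produces an open normal subgroup $N\trianglelefteq G$ and a single index $i$ such that every $w$-value lying in a fixed coset of $N$ is contained in $G_{i}$.

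The heart of the argument is to upgrade this local confinement to a uniform bound on the Sylow ranks of $w(G)$. Working at one prime $p$ at a time, one combines the Baire localization with the finiteness of $G/N$, the commutator identities governing $[u(G),v(G)]$, and the Neumann-type covering theorem (to discard cosets of subgroups of infinite index) to show that the relevant pro-$p$ section of $w(G)$ is topologically generated by boundedly many $w$-values, each of which lies in one of the $G_{i}$; this section is therefore embeddable, up to bounded index, into finitely many conjugates of the finite rank subgroups $G_{1},\dots,G_{s}$. Lazard's theorem, identifying finite rank pro-$p$ groups with the $p$-adic analytic ones, then converts this generation bound into a rank bound that does not depend on $p$.

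The main obstacle is precisely this propagation step, and I expect it to split into two difficulties. The first is uniformity across the infinitely many primes: a single Baire localization controls only one coset, so one must cover all the relevant pro-$p$ pieces simultaneously, which is exactly where the machinery developed for Theorems \ref{222} and \ref{333} enters. The second is that the hypothesis constrains only the $w$-values and not the $u$- and $v$-values, so the naive induction from $w$ to $u$ and $v$ is unavailable; I plan to circumvent this by working inside the subgroup $u(G)$ with the $v$-values as the constrained data, peeling off one layer of the word at a time while preserving the covering hypothesis, and by falling back on the quantitative lower central case through the containment $w(G)\le\gamma_{\mathrm{wt}(w)}(G)$. Controlling the non-prosoluble contributions, where distinct primes interact, will require the structure theory of finite rank profinite groups, in particular their virtual prosolubility, to keep the semisimple part bounded.
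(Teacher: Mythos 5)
Your proposal is not a proof but a plan whose central step is missing, and you say so yourself: the ``propagation step'' that is supposed to upgrade the Baire localization to a uniform bound on the Sylow ranks of $w(G)$ is never carried out, only described as an expected combination of ``commutator identities'', a ``Neumann-type covering theorem'' and Lazard's theorem. Worse, the inductive frame you choose --- induction on the weight of $w$, writing $w=[u,v]$ --- founders on exactly the obstruction you name in your last paragraph: the hypothesis covers only the $w$-values, so nothing is known about the $u$-values or $v$-values, and the covering hypothesis does not descend to $u(G)$ with ``$v$-values as the constrained data''. There is no reason a $v$-value, or a commutator of a $u$-value with an element of $v(G)$, should lie in $\bigcup_i G_i$, so the induction has no valid inductive hypothesis to invoke. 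A secondary problem is that you apply Baire's theorem to the image set $G_w$ rather than to the domain $G^{n}$ of the word map; the useful conclusion (a whole ``box'' of tuples $a_1H\times\cdots\times a_nH$ all of whose $\delta_k$-values land in a single $G_j$) comes from covering $G^{2^k}$ by the closed preimages $S_i=\{(x_1,\dots,x_{2^k}) : \delta_k(x_1,\dots,x_{2^k})\in G_i\}$, not from a nonempty relative interior inside $G_w$.

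The paper's route avoids your obstruction entirely and you should compare it with your plan. Instead of inducting on the structure of $w$, one passes to the word $\delta_k$ (every $\delta_k$-value is a $w$-value when $k$ is the weight of $w$, Lemma \ref{um}), applies Baire to $G^{2^k}$ to find an open normal $H$ and cosets $a_1H,\dots,a_{2^k}H$ whose $\delta_k$-values all lie in one $G_j$, and then, after factoring out two normal subgroups of finite rank (the normal closure of $K\cap H$ and then of $K$ itself, where $K=\langle \delta_k(a_1h_1,\dots,a_{2^k}h_{2^k})\rangle\le G_j$), arrives at the law $\delta_k\equiv 1$ on those cosets. Lemma \ref{22} then forces $H$ to be soluble, so $G$ is virtually soluble, and the theorem reduces to Proposition \ref{soluble_rank}, which is proved by induction on the derived length using Lemma \ref{lemma nontrivial w-values} (the marginal subgroup) together with the rank analogue of Schur's theorem from \cite{kurda1995}. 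None of the ingredients you propose for the hard step --- uniform Sylow rank bounds across primes, Lazard's theorem, Guralnick--Lucchini --- is needed for this qualitative statement (they belong to the quantitative Theorem \ref{555}), and none of the ingredients that actually close the argument (Lemma \ref{22}, the marginal subgroup lemma, the Schur-type rank theorem) appears in your proposal.
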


\begin{theorem}\label{555} Let $k,r,s$ be positive integers and $G$ a profinite group that has  subgroups $G_1,G_2,\dots,G_s$ whose union contains all $\gamma_k$-values in $G$. Suppose that each of the subgroups $G_1,G_2,\dots,G_s$ has finite rank at most $r$. Then $\gamma_k(G)$ has finite $(k,r,s)$-bounded rank.
\end{theorem}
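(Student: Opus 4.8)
The plan is to reduce everything to finding, inside $W:=\gamma_k(G)$, an open subgroup of bounded rank and bounded index. First I would replace each $G_i$ by $G_i\cap W$; since subgroups of a group of rank at most $r$ again have rank at most $r$, and since every $\gamma_k$-value lies both in $W$ and in some $G_i$, the intersections $G_i\cap W$ are subgroups of $W$ of rank at most $r$ whose union still contains all $\gamma_k$-values. Thus we may assume $G_1,\dots,G_s\le W$ and the goal is to bound $\mathrm{rk}(W)$ in terms of $k,r,s$. It is worth noting that the case $s=1$ is trivial: if all $\gamma_k$-values lie in a single subgroup $H$ of rank at most $r$, then the subgroup they topologically generate satisfies $\gamma_k(G)\le H$, so $\mathrm{rk}(\gamma_k(G))\le r$. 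Consequently the entire difficulty lies in passing from one covering subgroup to several, and it is essential that \emph{every} $\gamma_k$-value, not merely a topological generating set, be contained in the union, since generation by subgroups of bounded rank does not by itself bound the rank.

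The backbone of the argument is a Neumann-type reduction in the number of covering subgroups. Granting that the set of $\gamma_k$-values is in fact covered already by those $G_i$ for which the index $[W:G_i]$ is finite and $(k,r,s)$-bounded, the conclusion follows quickly. Indeed, let $D$ be the intersection of this bounded collection of subgroups of bounded index; then $D$ is open in $W$, the index $n:=[W:D]$ is $(k,r,s)$-bounded, and $\mathrm{rk}(D)\le r$ because $D$ is contained in some $G_i$. For every closed subgroup $K$ of $W$ one has $d(K)\le d(K\cap D)+\log_2 n\le r+\log_2 n$, whence $\mathrm{rk}(W)\le r+\log_2 n$ is $(k,r,s)$-bounded, as required.

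The step that carries all the weight, and where I expect the main obstacle, is justifying that the subgroups of infinite (or unbounded) index may be discarded. This is the analogue for the set of $\gamma_k$-values of B.H. Neumann's theorem that infinite-index cosets are redundant in a covering of the whole group; here, however, we are covering only the values of an outer commutator word rather than all of $W$, so the classical density argument is unavailable. In a profinite group a closed subgroup of infinite index has Haar measure zero, but finitely many such subgroups can still topologically generate $W$, so the redundancy is genuinely false without using the structure of $\gamma_k$. This is precisely the point at which the methods developed for Theorems \ref{222} and \ref{333} are brought to bear: a Sylow reduction to the pro-$p$ case, Herfort's theorem \cite{he} bounding the set of relevant primes, and the Lie-theoretic techniques of Zelmanov \cite{zel} to secure estimates that are uniform in $p$. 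I would prove the reduction by induction on the weight of the outer commutator word, which is where the dependence on $k$ enters, combined with a pigeonhole/collecting argument distributing the values among the $s$ pieces.

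The two delicate features to watch are the interaction between the combinatorial distribution of values and the non-normality of the $G_i$, and the requirement that the pro-$p$ estimates be independent of the prime; the latter is exactly the uniformity that the restricted-Burnside-problem machinery is designed to provide, and is the reason those Lie-theoretic techniques, rather than softer rank-theoretic arguments, are needed to keep the final bound a function of $k,r,s$ alone.
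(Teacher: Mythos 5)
Your argument hinges entirely on the claim that the set of $\gamma_k$-values is already covered by those $G_i$ whose index in $W=\gamma_k(G)$ is finite and $(k,r,s)$-bounded, and this claim is never proved. It cannot simply be ``granted'': it is a substantially \emph{stronger} assertion than the theorem itself (the theorem bounds the rank of $W$, not the index of any covering subgroup), and B.H. Neumann's redundancy theorem does not apply, since only the closed set of $\gamma_k$-values is covered rather than all of $W$; a union of finitely many infinite-index closed subgroups can perfectly well contain a topological generating set (e.g.\ $\mathbb{Z}_p(1,0)\cup\mathbb{Z}_p(0,1)$ generates $\mathbb{Z}_p^2$), so there is no soft density or measure argument to fall back on. You acknowledge this is the crux but then offer only a list of tools --- Sylow reduction, Herfort's theorem, Zelmanov's Lie-theoretic techniques, induction on the weight, pigeonhole --- without combining them into an argument; and the paper explicitly remarks that, unlike Theorem~\ref{333}, the proof of Theorem~\ref{555} does \emph{not} use Zelmanov's Lie methods, so the machinery you are appealing to is not even the relevant one. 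This is a genuine gap, not a deferred routine verification.

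The actual proof avoids any finite-index reduction and instead converts the covering hypothesis into a \emph{generation} bound. After the routine inverse limit reduction to finite $G$, one takes a Sylow $p$-subgroup $P$ of $\gamma_k(G)$: by Theorem~\ref{752} (the focal subgroup theorem for outer commutator words), $P$ is generated by powers of $\gamma_k$-values, all of which lie in $\bigcup_i G_i$, so $P$ is generated by the subgroups $P\cap G_i$ and hence by at most $rs$ elements; the same bound holds for every term $\gamma_j(U)$, $j\ge k$, of the lower central series of any subgroup $U$, via Lemma~\ref{751}. Lemma~\ref{lemma product}, resting on the Lubotzky--Mann theory of powerful $p$-groups, then turns the generator bound on all lower central terms into a $(k,r,s)$-bound on the rank of $P$, and the Guralnick--Lucchini theorem (Theorem~\ref{bound for rank}) passes from Sylow subgroups to $\gamma_k(G)$. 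If you want to rescue your outline, the step you must supply is exactly this replacement: a device producing a bound on $d(P)$ from the covering, not a bound on indices.
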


Unlike the situation of Theorem \ref{333}, the proof of Theorem \ref{555} does not use Zelmanov's Lie-theoretic techniques. Instead, an important r\^ole in the proof is played by the Lubotzky--Mann theory of powerful $p$-groups \cite{lumann}.

Throughout the paper the expression ``$(a, b, \ldots)$-bounded'' stands for ``bounded from above by a function depending only on the parameters $a$, $b$, $\ldots$''.

\section{Local finiteness of verbal subgroups} 
Our goal in this section is to prove Theorem \ref{222}. Zelmanov's theorem that every periodic compact group is locally finite will be used without explicit references. We start with some technical lemmas.   
\begin{lemma}
\label{111} 
Let $G$ be a group, $k$ a positive integer  and $h,a_1,\dots,a_{2^k}$ elements of $G$. Let  us denote by $x_j$ and $y_{j}$ the  two elements of $G$  obtained by replacing in $\delta_k(a_1,\dots,a_{2^k})$ the entry $a_j$ with $a_jh$ and  $h$, respectively. Then there exist $h_1,\dots,h_{2^k}\in \langle h^G\rangle$ such that $$x_j=\delta_k(a_1^{h_1},\dots,a_{2^k}^{h_{2^k}})y_j.$$
\end{lemma}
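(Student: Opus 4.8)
The plan is to argue by induction on $k$, exploiting the recursive definition $\delta_k = [\delta_{k-1}, \delta_{k-1}]$ together with the standard commutator identities
$$[xy,z] = [x,z]^y[y,z], \qquad [x,yz] = [x,z][x,y]^z.$$
For the base case $k = 0$ one has $\delta_0(a_1) = a_1$, so $x_1 = a_1 h$ and $y_1 = h$, and the assertion holds with $h_1 = 1$. Before the inductive step I would record two elementary facts to be used repeatedly. First, conjugation is an automorphism of $G$, so for any $g \in G$ one has $\delta_{k-1}(b_1, \ldots, b_m)^g = \delta_{k-1}(b_1^g, \ldots, b_m^g)$; this lets me absorb conjugations into the arguments of $\delta_{k-1}$, and hence of $\delta_k$. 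Second, any value of an outer commutator word in which the element $h$ occurs as one of the entries lies in $\langle h^G\rangle$: passing to the quotient $G/\langle h^G\rangle$ kills $h$, and a nested commutator with a trivial entry is itself trivial, so the value dies in the quotient.

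For the inductive step, write $u = \delta_{k-1}(a_1, \ldots, a_{2^{k-1}})$ and $v = \delta_{k-1}(a_{2^{k-1}+1}, \ldots, a_{2^k})$, so that $\delta_k(a_1, \ldots, a_{2^k}) = [u, v]$, and split according to whether the modified entry $a_j$ lies in the left half ($j \le 2^{k-1}$) or the right half ($j > 2^{k-1}$). Consider first $j \le 2^{k-1}$. Applying the induction hypothesis to $\delta_{k-1}$ on the left half, the word $u$ with $a_j$ replaced by $a_j h$ becomes $\tilde u\, u'$, where $\tilde u = \delta_{k-1}(a_1^{g_1}, \ldots, a_{2^{k-1}}^{g_{2^{k-1}}})$ with all $g_i \in \langle h^G\rangle$ and $u'$ is the $y$-version of $u$. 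Then $x_j = [\tilde u\, u', v]$, and the first identity gives $x_j = [\tilde u, v]^{u'}[u', v]$. Here $[u', v] = y_j$ by construction, while $[\tilde u, v]^{u'} = [\tilde u^{u'}, v^{u'}]$ is, after absorbing the conjugations into the arguments, exactly $\delta_k(a_1^{h_1}, \ldots, a_{2^k}^{h_{2^k}})$ with $h_i = g_i u'$ for $i \le 2^{k-1}$ and $h_i = u'$ for $i > 2^{k-1}$. Since $u' \in \langle h^G\rangle$ by the second preliminary fact, all these $h_i$ lie in $\langle h^G\rangle$, and the required identity $x_j = \delta_k(\ldots)\, y_j$ follows.

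The case $j > 2^{k-1}$ is symmetric but uses the second commutator identity and requires one extra maneuver. Writing the modified right half as $\tilde v\, v'$ via the induction hypothesis (with $v'$ the $y$-version and the $\delta_{k-1}$-part conjugated by elements of $\langle h^G\rangle$), one gets $x_j = [u, \tilde v\, v'] = [u, v'][u, \tilde v]^{v'}$. Now $[u, v'] = y_j$, but it appears on the \emph{left}, whereas the statement requires $y_j$ on the right. I would remedy this by conjugating, writing $x_j = y_j\bigl([u,\tilde v]^{v'}\bigr) = \bigl([u,\tilde v]^{v' y_j^{-1}}\bigr)\, y_j$, and then absorbing the conjugation $v' y_j^{-1}$ into the arguments of $\delta_k$ as before. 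Since both $v'$ and $y_j$ are outer commutator values containing $h$, they lie in $\langle h^G\rangle$, so the resulting conjugating elements again belong to $\langle h^G\rangle$, completing the induction.

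The main obstacle I anticipate is purely bookkeeping: one must verify at every stage that the accumulated conjugating elements stay inside $\langle h^G\rangle$ and that the $y$-version term ends up on the correct side. The normal-closure membership is guaranteed by the second preliminary observation, and the side of $y_j$ is corrected by the single conjugation in the right-half case; once these two points are isolated, the remaining computation is a routine application of the two commutator identities.
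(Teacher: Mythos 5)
Your proposal is correct and follows essentially the same route as the paper: induction on $k$ using the standard commutator expansions, with the $y_j$-term moved to the right by conjugating by $v'y_j^{-1}$ (the paper's $s_jy_j^{-1}$), and membership of all accumulated conjugators in $\langle h^G\rangle$ justified exactly as you do. The only cosmetic differences are that you treat both halves explicitly while the paper handles only the case $j>2^{k-1}$ and invokes symmetry, and that you base the induction at $k=0$ rather than $k=1$.
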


\begin{proof}  We argue by induction on $k$. Assume first that $k=1$. Denote $a_1^{-1}h{a_1}$ by $t$. Using the well-known commutator identities write
$$x_1=[a_1h,a_2]=[a_1^h,a_2^h][h,a_2]=[a_1^h,a_2^h]y_1$$ and 
$$x_2=[a_1,a_2h]=[a_1,h][a_1,a_2]^h=[a_1^t,a_2^t]y_2.$$ This shows that if $k=1$, the lemma is correct. Hence, we assume that $k\geq 2$ and use induction on $k$. Without loss of generality we also assume that $j\geq 2^{k-1}$. Let  us denote by $r_j$ the element of $G$ obtained by replacing in $\delta_{k-1}(a_{2^{k-1}+1},\dots,a_{2^k})$ the entry $a_j$ with $a_jh$ and  by $s_j$ the element of $G$ obtained by replacing in $\delta_{k-1}(a_{2^{k-1}+1},\dots,a_{2^k})$ the entry $a_j$ with $h$. By the inductive hypothesis  there exist $g_1,\dots,g_{2^{k-1}}\in \langle h^G\rangle$ such that 
\begin{equation}
\label{rj}
r_j=\delta_{k-1}(a_{2^{k-1}+1}^{g_1},\dots,a_{2^k}^{g_{2^{k-1}}})s_j.
\end{equation} 
For the sake of  simplicity we denote $\delta_{k-1}(a_1,\dots,a_{2^{k-1}})$ by $\Delta$ and observe that  $x_j=[\Delta,r_j] $ and $y_{j}=[\Delta,s_j]$ .   So by (\ref{rj}) we have 
\begin{equation*} 
\begin{split}
x_j=[\Delta,r_j] =&[\Delta,\delta_{k-1}(a_{2^{k-1}+1}^{g_1},\dots,a_{2^k}^{g_{2^{k-1}}})s_j]=\\
& [\Delta,\delta_{k-1}(a_{2^{k-1}+1}^{g_1},\dots,a_{2^k}^{g_{2^{k-1}}})]^{s_jy_j^{-1}}[\Delta,s_j].
\end{split}
\end{equation*}
Since it is clear that  both elements $s_j$ and $y_j$ belong to $\langle h^G\rangle$,  the lemma follows.
\end{proof}

Let $G$ be a group  and  $w=w(x_{1},\ldots,x_{n})$ a nontrivial  group word. Suppose that $G$ has  a subgroup $H$ and elements $g_{1},\ldots,g_{n}$ such that $w(g_{1}h_{1},\ldots,g_{n}h_{n})=1$ for all $h_{1},h_{2},\ldots, h_{n}\in H$. Then we say that the law $w\equiv 1$ is satisfied on the cosets $g_{1}H,g_{2}H, \ldots, g_{n}H$.   
\begin{lemma}
\label{22} Let $G$ be a group and $H$ a normal subgroup of $G$. Let $k$ be a positive integer and  suppose that $a_1,\dots,a_{2^k}$ are elements of $G$ such that  the law $\delta_{k}\equiv 1$ is satisfied on the cosets $a_1H,\dots,a_{2^k}H$. Then $H$ is soluble with derived length at most $k$.
\end{lemma}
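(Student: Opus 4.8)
The plan is to deduce from the hypothesis the much stronger statement that $\delta_k$ vanishes identically on $H$, that is $\delta_k(h_1,\dots,h_{2^k})=1$ for all $h_1,\dots,h_{2^k}\in H$. Since the verbal subgroup $\delta_k(H)$ equals the $k$th derived subgroup $H^{(k)}$, this is precisely the assertion that $H^{(k)}=1$, i.e. that $H$ is soluble of derived length at most $k$. Thus everything reduces to showing that the law $\delta_k\equiv 1$, which we are given on the cosets $a_1H,\dots,a_{2^k}H$, in fact holds on the cosets $H,\dots,H$.

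The engine of the argument is Lemma \ref{111}, which I would use to replace the coset representatives by the identity one at a time. Concretely, I claim that whenever the law $\delta_k\equiv 1$ holds on cosets $b_1H,\dots,b_{2^k}H$, it also holds on the cosets obtained by replacing a single $b_j$ with $1$. To see this, fix $j$, pick arbitrary $c_i\in b_iH$ for $i\neq j$ and an arbitrary $h\in H$ to be placed in position $j$, and apply Lemma \ref{111} to the base tuple $(c_1,\dots,c_{j-1},b_j,c_{j+1},\dots,c_{2^k})$ with the element $h$. In the notation of that lemma, $y_j=\delta_k(c_1,\dots,c_{j-1},h,c_{j+1},\dots,c_{2^k})$ is exactly the value we wish to prove trivial, while $x_j=\delta_k(c_1,\dots,c_{j-1},b_jh,c_{j+1},\dots,c_{2^k})$, and the lemma supplies elements $h_1,\dots,h_{2^k}\in\langle h^G\rangle$ with
\[
x_j=\delta_k(c_1^{h_1},\dots,b_j^{h_j},\dots,c_{2^k}^{h_{2^k}})\,y_j.
\]

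Now both factors on the right of the displayed identity are forced to be trivial by the law on $b_1H,\dots,b_{2^k}H$. Indeed, since $H$ is normal and $h\in H$ we have $\langle h^G\rangle\leq H$, so each $h_m\in H$; as conjugation by an element of $H$ preserves cosets of $H$, every entry $c_m^{h_m}$ (respectively $b_j^{h_j}$) still lies in $b_mH$, whence $\delta_k(c_1^{h_1},\dots,b_j^{h_j},\dots,c_{2^k}^{h_{2^k}})=1$. Likewise $b_jh\in b_jH$ and $c_m\in b_mH$, so $x_j$ is a $\delta_k$-value on the same cosets and hence $x_j=1$. The identity therefore collapses to $y_j=1$, proving the claim. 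Starting from the given cosets $a_1H,\dots,a_{2^k}H$ and applying the claim successively for $j=1,2,\dots,2^k$, I replace every representative by $1$ and conclude that $\delta_k\equiv1$ on $H,\dots,H$, as required.

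The only delicate point, and the step I expect to be the crux, is the correct set-up of Lemma \ref{111}: one must choose the base tuple and the inserted element $h$ so that the two right-hand factors $x_j$ and $\delta_k(c_1^{h_1},\dots,b_j^{h_j},\dots,c_{2^k}^{h_{2^k}})$ both become $\delta_k$-values lying in the prescribed cosets, leaving $y_j$ isolated. Once the normality of $H$ is invoked to guarantee $\langle h^G\rangle\leq H$ and the invariance of the cosets under conjugation by elements of $H$, the remaining induction on the number of representatives already replaced is purely bookkeeping.
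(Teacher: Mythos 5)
Your proposal is correct and follows essentially the same route as the paper: both arguments use Lemma \ref{111} to write $x_j$ as a conjugated $\delta_k$-value times $y_j$, observe that normality of $H$ forces the conjugators into $H$ so that both right-hand factors remain $\delta_k$-values on the prescribed cosets and hence vanish, and conclude $y_j=1$. The only difference is bookkeeping: the paper inducts on the number of entries already replaced by elements of $H$, whereas you replace the coset representatives by $1$ one position at a time, which is the same induction in different clothing.
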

\begin{proof} 
By the hypothesis all elements of the form $\delta_k(b_{1},\dots,b_{2^k})$, where each entry $b_{i}$ belongs to $a_{i}H$, are trivial. Let $y$ be an element that can be obtained from $\delta_k(b_{1},\dots,b_{2^k})$ by replacing some of the entries $b_1,\dots,b_{2^k}$ with elements of $H$. We wish to show that $y=1$. Suppose that $y$ is obtained from $\delta_k(b_{1},\dots,b_{2^k})$ by replacing precisely $n$ of the entries and argue by induction on $n$. If $n=0$, then $y=\delta_k(b_{1},\dots,b_{2^k})=1$. Thus, assume that $n\geq 1$ and use induction on $n$. Suppose now that $y=\delta_k(c_1,\dots,c_{2^k})$, where each $c_i$ equals either  $b_{i}\in a_iH$ or $h_i\in H$, and choose an index $j$ such that $c_j=h_j$. 

Let $x$ be the element obtained from $\delta_{k}(c_{1},\ldots,c_{j-1},a_{j},c_{j+1},\ldots,c_{2^{k}})$ by replacing $a_j$ with $a_jh_j$. Furthermore let us observe that $y$ equals the element obtained  from $\delta_{k}(c_{1},\ldots,c_{j-1},a_{j},c_{j+1},\ldots,c_{2^{k}})$ by replacing  the entry $a_j$ with $h_j$. Applying Lemma \ref{111} to  $\delta_{k}(c_{1},\ldots,a_{j},\ldots,c_{2^{k}})$ with  $h=h_{j}$ we conclude that there exist $g_1,\dots,g_{2^k}\in H$ such that 
\begin{equation}
\label{x}
x=\delta_k(c_1^{g_1},\dots,a_j^{g_j},\dots,c_{2^k}^{g_{2^k}})y.
\end{equation}
Since both $x$ and $\delta_k(c_1^{g_1},\dots,a_j^{g_j},\dots,c_{2^k}^{g_{2^k}})$ can be obtained from elements of type $\delta_k(b_{1},\dots,b_{2^k})$ by replacing $n-1$ entries with elements of $H$, by induction we have $x=1$ and $\delta_k(c_1^{g_1},\dots,a_j^{g_j},\dots,c_{2^k}^{g_{2^k}})=1$.  Thus  by (\ref{x}) also $y$ must be $1$. 

In the particular case where $n=2^k$ we have $\delta_k(h_1,\dots,h_{2^k})=1$ for all $h_1,\dots,h_{2^k}\in H$. It follows that $H$ is soluble with derived length at most $k$, as desired.
\end{proof}

In the next lemma we shall require the concept of  the marginal subgroup corresponding to a word.  Let $G$ be a group and $w=w(x_{1},\ldots,x_{n})$ any word. The marginal subgroup $w^{*}(G)$ of $G$ corresponding to the word $w$ is defined as the set of all $a\in G$ such that  
$$w(g_{1},\ldots,ag_{i},\ldots,g_{n})=w(g_{1},\ldots,g_{i},\ldots,g_{n}),$$ for all $g_{1},\ldots,g_{n} \in G$ and $ 1\leq i\leq n$. It is well known that $w^{*}(G)$ is a characteristic subgroup of $G$ and that $[w^{*}(G),w(G)]=1.$
If $w$ is an outer commutator word, then $w^{*}(G)$ is precisely the set $S$ such that  $w(g_{1},\ldots, g_{n})=1$ whenever at least one of the elements $g_{1},\ldots,g_{n}$ belongs to $S$. A proof of this can be found in \cite[Theorem 2.3]{turner}. The next lemma was obtained in discussions of the first author with G.A. Fern\'andez-Alcober.   

\begin{lemma}\label{lemma nontrivial w-values} Let $G$ be a group and $w$ any outer commutator word.  If $N$ is a normal subgroup of $G$ containing no nontrivial $w$-values, then $[N,w(G)]=1$.
\end{lemma}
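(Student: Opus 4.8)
The plan is to show that $N$ is contained in the marginal subgroup $w^{*}(G)$ and then to invoke the identity $[w^{*}(G),w(G)]=1$ recalled just above the statement.

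First I would record the elementary fact that, because $w$ is an outer commutator word and $N$ is normal, any substitution that places an element of $N$ into one of the variables of $w$ produces a value that again lies in $N$. This is proved by induction on the weight of $w$. If $w$ has weight $1$, then $w(g)=g$ and there is nothing to prove. If $w=[u,v]$, where $u$ and $v$ are outer commutator words on disjoint sets of variables, then the distinguished argument (the one taken from $N$) occurs among the variables of $u$ or of $v$, say of $u$; by the inductive hypothesis the corresponding value of $u$ lies in $N$, and since $N\trianglelefteq G$ the commutator $[u(\cdots),v(\cdots)]=u(\cdots)^{-1}\,u(\cdots)^{v(\cdots)}$ again lies in $N$.

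Next I would combine this with the hypothesis that $N$ contains no nontrivial $w$-values. Fix $a\in N$ and arbitrary $g_{1},\ldots,g_{n}\in G$, and consider a value $w(g_{1},\ldots,g_{n})$ in which at least one argument equals $a$. By the previous paragraph this value lies in $N$; but it is also a $w$-value, and $N$ contains no nontrivial $w$-values, so it must equal $1$. Thus every $a\in N$ has the property that $w$ evaluates trivially whenever $a$ is inserted as one of its arguments. By the description of the marginal subgroup of an outer commutator word recalled above (\cite[Theorem 2.3]{turner}), this is exactly the membership condition for $w^{*}(G)$, whence $a\in w^{*}(G)$. Since $a\in N$ was arbitrary, we obtain $N\subseteq w^{*}(G)$.

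Finally, since $[w^{*}(G),w(G)]=1$, it follows that $[N,w(G)]\subseteq[w^{*}(G),w(G)]=1$, which is the assertion. The proof is short, and the only point requiring genuine care is the reduction of the hypothesis to a statement about the marginal subgroup: once normality of $N$ is used to keep the relevant values inside $N$, the triviality of those values follows from the absence of nontrivial $w$-values in $N$, and the rest is a direct application of the quoted characterization together with the standard commuting property of $w^{*}(G)$ and $w(G)$.
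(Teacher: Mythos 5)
Your proof is correct and follows essentially the same route as the paper: use normality of $N$ and the outer-commutator structure to see that any $w$-value with an argument in $N$ lies in $N$, conclude it is trivial, deduce $N\leq w^{*}(G)$ via the Turner-Smith characterization, and finish with $[w^{*}(G),w(G)]=1$. The only difference is that you spell out the induction on the weight of $w$, which the paper leaves implicit.
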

\begin{proof} 
Let $k$ be the weight of $w$. Since $N$ is normal and $w$ is an outer commutator word, it follows that whenever $x\in N$ every element of the form  $$w(g_{1},\ldots, g_{i-1},x,g_{i+1},\ldots,g_{k})$$ belongs to $N$. Thus, by the hypothesis it must be trivial. Therefore $N\leq w^{*}(G)$. The result is now clear since $w^{*}(G)$ always commutes with $w(G)$. 
\end{proof}

A proof of the next lemma can be found in \cite[Lemma 4.1]{Shu}.
\begin{lemma}\label{um} Let $G$ be a group and $w$ an outer commutator word of weight $k$. Then every $\delta_k$-value in $G$ is  a $w$-value.
\end{lemma}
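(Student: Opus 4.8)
The plan is to prove the statement by induction on the weight $k$ of $w$, exploiting the recursive bracket structure of outer commutator words. Writing $w=[u,v]$ for outer commutator words $u,v$ of weights $m$ and $n$ with $m+n=k$, a $w$-value is precisely a commutator $[U,V]$ in which $U$ is a $u$-value and $V$ is a $v$-value. On the other hand, by the defining recursion of the derived words, every $\delta_k$-value has the form
\[
\delta_k(a_1,\dots,a_{2^k})=\bigl[\delta_{k-1}(a_1,\dots,a_{2^{k-1}}),\,\delta_{k-1}(a_{2^{k-1}+1},\dots,a_{2^k})\bigr],
\]
so it is a commutator of two $\delta_{k-1}$-values. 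The whole difficulty is that the recursion for $\delta_k$ drops the index by one, whereas the recursion for $w$ splits the weight as $m+n=k$; to match the two I must feed $\delta_{k-1}$-values into the inductive hypotheses for $u$ and $v$, whose weights $m,n$ are in general strictly smaller than $k-1$.

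The bridge is a monotonicity observation which I would record first: \emph{for every $j\ge 1$, each $\delta_{j+1}$-value is also a $\delta_j$-value}, and hence, by iteration, every $\delta_k$-value is a $\delta_j$-value for all $1\le j\le k$. I would prove this by induction on $j$. Given a $\delta_{j+1}$-value $[\delta_j(\vec b_1),\delta_j(\vec b_2)]$, where $\vec b_1,\vec b_2$ denote the two halves of the arguments, the inductive hypothesis lets me rewrite each $\delta_j$-value $\delta_j(\vec b_i)$ as a $\delta_{j-1}$-value $\delta_{j-1}(\vec c_i)$; assembling $\vec c_1,\vec c_2$ into a single $2^j$-tuple $\vec c$ and using $\delta_j=[\delta_{j-1},\delta_{j-1}]$ gives $\delta_j(\vec c)=[\delta_j(\vec b_1),\delta_j(\vec b_2)]$, the original $\delta_{j+1}$-value. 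The base case $j=1$ is immediate, since $\delta_2(a_1,\dots,a_4)=[[a_1,a_2],[a_3,a_4]]$ is visibly the $\delta_1$-value $[b_1,b_2]$ with $b_1=[a_1,a_2]$ and $b_2=[a_3,a_4]$.

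With monotonicity in hand the main induction closes quickly. For $k=1$ the word $w=x_1$ is satisfied by every element, so there is nothing to prove. For $k\ge 2$ I write $w=[u,v]$ with weights $m,n\ge 1$ and $m+n=k$, so that $m,n\le k-1$. Taking an arbitrary $\delta_k$-value and expanding it as the commutator of two $\delta_{k-1}$-values as above, monotonicity turns the first $\delta_{k-1}$-value into a $\delta_m$-value and the second into a $\delta_n$-value; the inductive hypothesis applied to $u$ and $v$ then presents these as a $u$-value $U$ and a $v$-value $V$, whence the original element equals $[U,V]$ and is a $w$-value. The only point requiring care is the bookkeeping of the argument tuples under the two different recursions, together with the verification that the weight bounds $m,n\le k-1$ really do let monotonicity apply; beyond this the argument is purely structural and uses no commutator identities.
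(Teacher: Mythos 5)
Your proof is correct and is essentially the standard argument: the paper itself gives no proof of this lemma, citing instead Lemma 4.1 of \cite{Shu}, where the result is established by exactly this induction on the weight of $w$ via the decomposition $w=[u,v]$, combined with the monotonicity fact that every $\delta_{j+1}$-value is a $\delta_j$-value. Both your bridge lemma and the main induction are carried out correctly, so nothing is missing.
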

\begin{lemma}\label{schur} Let $G$ be a soluble-by-finite profinite group and suppose that $G/Z(G)$ is periodic. Then $G'$ has finite exponent. In particular $G'$ is locally finite.
\end{lemma}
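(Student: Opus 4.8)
The plan is to proceed in three stages: first to upgrade the periodicity of $G/Z(G)$ to finite exponent, then to reduce the finiteness of $\exp(G')$ to a uniform bound on finitely generated subgroups via Schur's classical theorem, and finally to extract that bound from the soluble-by-finite hypothesis.

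\emph{Stage 1.} First I would show that $\bar G:=G/Z(G)$ has finite exponent. Since $Z(G)$ is closed, $\bar G$ is again a profinite group, and it is soluble-by-finite because $G$ is. For an abelian periodic profinite group $A$ one has $A=\bigcup_{n\ge 1}\{a\in A: a^{n}=1\}$, a countable union of closed subgroups, so by the Baire category theorem one of them is open, hence of finite index, which forces $A$ to have finite exponent. Passing to the general soluble-by-finite case by induction on the derived length of a soluble normal subgroup of finite index, and using that an extension of a group of finite exponent by a group of finite exponent again has finite exponent, I would conclude that $\bar G$ has finite exponent, say $e$; thus $x^{e}\in Z(G)$ for every $x\in G$.

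\emph{Stage 2.} Next I would reduce to finitely generated subgroups. By Zelmanov's theorem $\bar G$ is locally finite, so for any finitely generated closed subgroup $K\le G$ the group $K/(K\cap Z(G))$, being a finitely generated subgroup of $\bar G$, is finite; as $K\cap Z(G)\le Z(K)$, the quotient $K/Z(K)$ is finite, and Schur's theorem gives that $K'$ is finite. Consequently the abstract derived subgroup $[G,G]$ is locally finite. The point is then to produce an integer $N$ with $\exp(K')\le N$ for \emph{all} finitely generated $K$: once this is available, every element of the dense subgroup $[G,G]$ satisfies $x^{N}=1$, and since $\{x\in G': x^{N}=1\}$ is closed it must coincide with $G'$. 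Hence $G'$ would have finite exponent dividing $N$, and being periodic and profinite it would be locally finite by Zelmanov's theorem, as required. (A uniform bound is genuinely needed here: a closed profinite group may have a dense locally finite subgroup of infinite exponent, so local finiteness of $[G,G]$ alone does not suffice.)

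\emph{Stage 3, the main obstacle.} It remains to bound $\exp(K')$ independently of $K$. Each $K/Z(K)$ is a finite group whose exponent divides $e$ and whose derived length and the index of its soluble part are bounded by the corresponding invariants of $G$. Since $K'/(K'\cap Z(K))\cong (K/Z(K))'$ already has exponent dividing $e$, the required uniform bound amounts to bounding the exponent of $K'\cap Z(K)$, a quotient of the Schur multiplier of $K/Z(K)$, in terms of $e$ and the derived length. I would obtain this by induction on the derived length, the crucial base case being metabelian $K$, which can be handled by explicit commutator calculus: from $x^{e}\in Z(G)$ one gets in the abelian group $K'$, viewed as a module over the integral group ring $\mathbf{Z}[\bar G]$, the relation $(1+\bar x+\dots+\bar x^{\,e-1})\cdot[x,y]=0$, from which a bound on the exponent of the central commutators can be squeezed. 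I expect this stage to be the hard part, and it is precisely where the \emph{soluble}-by-finite hypothesis is indispensable: it allows these multiplier exponents to be bounded by elementary, derived-length-limited arguments, thereby avoiding the Lie-theoretic restricted-Burnside machinery of Zelmanov that is required in the unrestricted situation of Theorem \ref{333}.
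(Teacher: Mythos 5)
Your Stage 1 is sound and is in substance identical to the paper's first step: the paper likewise begins by noting that a periodic soluble(-by-finite) profinite group has finite exponent (quoting an exercise in \cite{wilson} for the abelian case, where you use Baire category), so that $G/Z(G)$ has finite exponent $e$. Stage 2 --- reducing to a \emph{uniform} bound on $\exp(K')$ over finitely generated subgroups $K$ via Schur's theorem and then passing to the closure --- is also the right shape of argument, and your parenthetical remark that local finiteness of the abstract derived subgroup alone would not suffice is well taken.

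The genuine gap is Stage 3, which is precisely the point where the paper invokes a citation rather than a computation: Mann's theorem \cite{mann} says that if $K$ is a finite group with $K/Z(K)$ of exponent $e$, then $\exp(K')$ is bounded in terms of $e$ alone, and a profinite version of this finishes the proof in one line. You do not prove your Stage 3; you only sketch a plan, and the sketch does not close up. Concretely: (i) the claim that $\exp(K'\cap Z(K))$ --- an image of the Schur multiplier of the finite group $K/Z(K)$ --- is bounded in terms of $e$ and the derived length is itself a nontrivial theorem, not a reduction; and (ii) in the metabelian base case the relation $(1+\bar x+\cdots+\bar x^{\,e-1})\cdot[x,y]=0$ does not yield a bound on the order of $[x,y]$. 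Multiplying it by $(\bar x-1)$ merely recovers the already known relation $(\bar x^{\,e}-1)\cdot[x,y]=0$, while isolating the integer $e$ via $e=(1+\bar x+\cdots+\bar x^{\,e-1})-(\bar x-1)v$ expresses $e\cdot[x,y]$ as $-(\bar x-1)v\cdot[x,y]$, i.e.\ in terms of longer commutators $[x,y,x,\dots]$; iterating pushes the problem to commutators of ever greater weight, and in a non-nilpotent metabelian group this recursion never terminates. So the ``squeezing'' step is missing, and your hope of bypassing the restricted Burnside machinery by exploiting solubility is, at best, unsubstantiated. The fix is simply to cite Mann's theorem at this point, as the paper does.
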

\begin{proof} It is well-known that a soluble periodic profinite group has finite exponent. For abelian groups this is Exercise 10(c) in \cite[page 45]{wilson} and in the general case the result follows easily by induction on the derived length. Thus, the exponent of $G/Z(G)$ is finite. We denote this by $e$. A theorem of Mann says that if $K$ is a finite group such that $K/Z(K)$ is of exponent $e$, then the exponent of $K'$ is $e$-bounded \cite{mann}. Applying now a profinite version of this theorem we conclude that the exponent of $G'$ is finite.
\end{proof}

\begin{proposition}\label{soluble_local finit} Let $G$ be a profinite group and $w$ any outer commutator word. Suppose that $G$ has finitely many periodic subgroups whose union contains all $w$-values in $G$. If $G$ contains an open soluble subgroup, then $w(G)$ is locally finite.
\end{proposition}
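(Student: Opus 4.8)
The plan is to prove that $w(G)$ is periodic; Zelmanov's theorem that periodic profinite groups are locally finite then completes the argument. I begin with two reductions. Since $G$ has an open soluble subgroup, its core $N$ is an open normal soluble subgroup, and $w(G)\cap N$ is a soluble subgroup of finite index in $w(G)$, so $w(G)$ is soluble-by-finite. Furthermore, it is enough to exhibit an open periodic subgroup $P$ of $w(G)$: such a $P$ is locally finite by Zelmanov's theorem, and any finitely generated subgroup $F$ of $w(G)$ meets $P$ in a subgroup of finite index in $F$, which is therefore finitely generated and, lying in the locally finite group $P$, finite; hence $F$ itself is finite. Thus the task becomes the production of an open periodic subgroup of $w(G)$.

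The first substantial step is a Baire category argument that upgrades the pointwise information ``every $w$-value is periodic'' to a uniform statement on a coset. The word map $w\colon G^{n}\to G$ is continuous, so the finitely many sets $w^{-1}(G_{1}),\dots,w^{-1}(G_{s})$ are closed and cover the compact group $G^{n}$. Hence one of them has nonempty interior, which yields an open normal subgroup $K$ of $G$ and elements $a_{1},\dots,a_{n}$ with $w(a_{1}k_{1},\dots,a_{n}k_{n})\in G_{j}$ for all $k_{1},\dots,k_{n}\in K$ and a fixed index $j$. The closed subgroup generated by these $w$-values lies in the periodic subgroup $G_{j}$ and is therefore periodic, so by Zelmanov's theorem it is locally finite. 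Running the same argument inside each of the finitely many cosets of $K^{n}$ in $G^{n}$, and refining $K$, one may assume that within every such coset there is a subcoset on which all $w$-values fall into a single periodic $G_{j}$.

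To turn this coset-wise periodicity into periodicity of an open subgroup of $w(G)$ I would induct on the derived length $d$ of $N$. Let $A=N^{(d-1)}$ be the last nontrivial term of the derived series of $N$; it is abelian and normal in $G$. The hypotheses of the proposition pass to $G/A$, whose open soluble subgroup $N/A$ has derived length $d-1$, so by induction $w(G)A/A\cong w(G)/(w(G)\cap A)$ is locally finite. One is thereby reduced to showing that the abelian normal subgroup $B=w(G)\cap A$ is periodic. Here I would invoke Herfort's theorem to bound the primes involved, exploit the multilinearity of the outer commutator word $w$ modulo $A$ (so that, fixing all but one entry, the coset map becomes a homomorphism into $A$), and combine this with the coset condition from the Baire step; Lemma \ref{111} supplies the needed commutator expansions and Lemma \ref{22} the coset-law mechanism. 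When $A$ carries no nontrivial $w$-values, Lemma \ref{lemma nontrivial w-values} places $B$ inside the centre of $w(G)$, and then Lemma \ref{schur} forces $w(G)'$ to have finite exponent, which is exactly the kind of control needed to propagate periodicity across the layers.

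The heart of the matter, and the step I expect to resist, is the periodicity of the abelian section $B$. The tempting shortcut --- that $w(G)$ is periodic because it is topologically generated by the periodic $w$-values --- is false in the profinite world: an abelian profinite group can be topologically generated by torsion elements and yet fail to be periodic, as the direct product of the cyclic groups of orders $p^{n}$ shows. Defeating this phenomenon requires extracting genuine uniformity (a bound on exponents, not merely finiteness of orders) from the Baire step and the multilinear structure of $w$, and it is this passage from ``each value is torsion'' to ``a whole open section has finite exponent'' that constitutes the real obstacle; once it is cleared on each abelian layer, the induction on the derived series delivers an open periodic subgroup of $w(G)$ and hence its local finiteness.
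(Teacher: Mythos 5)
Your proposal has a genuine gap, and you name it yourself: the local finiteness of the abelian layer $B=w(G)\cap A$ is left as ``the real obstacle,'' with only a list of tools you \emph{would} invoke. Nothing in the sketch closes this step, so the argument is incomplete. Worse, the difficulty you identify --- extracting a uniform exponent bound on a whole open section from the fact that each $w$-value is torsion --- is not the right target, and chasing it would drag you into precisely the hard uniformity questions (does a periodic profinite group have finite exponent?) that the paper avoids. The actual resolution is much softer. Let $M$ be the closed subgroup of the abelian layer $A$ generated by the $w$-values lying in $A$. Since $A$ is abelian and every generator of $M$ lies in one of the finitely many periodic subgroups $G_i$, one has $M=M_1\cdots M_s$ with $M_i=M\cap G_i$; each $M_i$ is a periodic profinite group, hence locally finite by Zelmanov, and a product of finitely many normal locally finite subgroups is locally finite. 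No exponent bound is needed, and your counterexample (the direct product of cyclic groups of orders $p^{n}$) is beside the point: that group is not a product of \emph{finitely many} periodic subgroups. Having factored out $M$, the layer $A$ contains no nontrivial $w$-values, so Lemma \ref{lemma nontrivial w-values} places $w(G)\cap A$ in the centre of $w(G)$; induction on the derived length makes $w(G)/Z(w(G))$ locally finite, Lemma \ref{schur} makes $w(G)'$ locally finite, and after factoring that out $w(G)$ is abelian and is again a product of the finitely many locally finite subgroups $w(G)\cap G_i$. This is exactly the chain of reductions you gesture at in your third paragraph, but you treat ``$A$ carries no nontrivial $w$-values'' as a lucky special case rather than as something you can always arrange by first deleting $M$.

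Two further points. The Baire category argument is misplaced: in the paper it belongs to the proof of Theorem \ref{222}, where it is used to \emph{manufacture} the open soluble subgroup via Lemma \ref{22}; in the present proposition that subgroup is a hypothesis, and the coset-wise uniformity you extract from Baire is never actually used to close your gap. Likewise, aiming for an open \emph{periodic} subgroup of $w(G)$ is a stronger intermediate goal than necessary --- the induction runs directly on local finiteness, which is closed under extensions and under finite products of normal subgroups, and that is all the argument requires.
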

\begin{proof} Let $d$ be the minimal among derived lengths of the open normal soluble subgroups of $G$. If $d=0$, then $G$ is finite and there is nothing to prove. So we assume that $d\geq 1$ and use induction on $d$. Thus, $G$ is infinite and let us choose an open normal soluble subgroup $K$ in $G$ such that the derived length of $K$ is precisely $d$.  Let $N$ be the last nontrivial term of the derived series of $K$. Denote by $M$ the subgroup generated by all $w$-values that belong to $N$ and let $G_1,G_2,\dots,G_s$ be the finitely many periodic subgroups whose union contains all $w$-values in $G$. We have $M=M_1\cdots M_s$, where $M_i=M\cap G_i$, for $i=1,\ldots,s$. Each subgroup $M_{i}$ is locally finite,  since it is a periodic profinite group,  and so it follows that also $M$ is locally finite.  We can pass to the quotient   $G/M$ and assume that $N$ contains no  nontrivial $w$-values. By Lemma \ref{lemma nontrivial w-values}  we have $w(G)\cap N\leq Z(w(G))$.  By induction on $d$ we can assume that the image of $w(G)$ in $G/N$ is locally finite and so $w(G)/Z(w(G))$ is locally finite. Lemma \ref{schur} now shows that the derived subgroup of $w(G)$ is locally finite. Thus, passing to the quotient $G/w(G)'$ we may assume that $w(G)$ is abelian. Then of course $w(G)$ is the product of subgroups $w(G)\cap G_{i}$ for $i=1,\ldots,s$. Since $w(G)$ is an abelian group which is a product of  finitely many periodic subgroups, it follows that $w(G)$ is locally finite. The proof is complete.
\end{proof}  

We are now in the position to prove Theorem \ref{222}.

\begin{proof}[Proof of Theorem \ref{222}] Let $k$ be the weight of $w$. According to Lemma \ref{um} every $\delta_k$-value is also a $w$-value. For each integer $i=1,\dots,s$ we set 
\begin{equation*}
S_i=\Big\{(x_1,\dots,x_{2^k})\in \underbrace{G\times\dots\times G}_{2^k}\mid \delta_k(x_1,\dots,x_{2^k})\in G_i\Big\}.
\end{equation*} 
Note that the sets $S_i$ are closed in $\underbrace{G\times\dots\times G}_{2^{k}}$ and cover  the whole of $\underbrace{G\times\dots\times G}_{2^{k}}$. By Baire's Category Theorem \cite[p.\ 200]{Kell} at least one of these sets contains a non-empty interior. Hence, there exist an open subgroup $H$ of $G$, elements $a_1,\dots,a_{2^k}$ in $G$ and an integer $j$ such that 
\begin{equation}
\delta_k(a_1h_1,\dots,a_{2^k}h_{2^k})\in G_j  \text{ for all } h_1,\dots,h_{2^k}\in H.
\end{equation}
Without loss of generality we can assume that the subgroup $H$ is normal. In this case $H$ normalizes the set of all commutators of the form $\delta_k(a_1h_1,\dots,a_{2^k}h_{2^k})$, where $h_1,\dots,h_{2^k}\in H$. Let $K$ be the subgroup of $G$ generated by all commutators of the form $\delta_k(a_1h_1,\dots,a_{2^k}h_{2^k})$, where $h_1,\dots,h_{2^k}\in H$. Note that  $K\leq G_{j}$. Since the subgroup $G_{j}$ is  locally finite, so is  $K$.  Let $D=K\cap H$. Then $D$ is a normal locally finite subgroup of $H$ and the normalizer  of $D$ in $G$ has finite index. Therefore there are only finitely many conjugates of $D$ in $G$. Let $D=D_1,D_2, \dots,D_r$ be all these conjugates. All of them are normal in $H$ and so their product $D_{1}D_{2}\cdots D_{r}$ is locally finite. By passing to the quotient $G/D_1D_2\cdots D_r$  we may assume that $D=1$.  Since $D=K\cap H$ and $H$ has finite index in $G$, it follows that $K$ is finite.  On the other hand,  the normalizer of $K$ has finite index in $G$ and so the normal closure, say $L$, of $K$ in $G$ is also finite. We can pass to the quotient group $G/L$ and assume that $K=1$. In that case we have $\delta_k(a_1h_1,\dots,a_{2^k}h_{2^k})=1$ for all $h_1,\dots,h_{2^k}\in H$.  Now by Lemma \ref{22}  the subgroup $H$ is soluble and so the theorem follows immediately from  Proposition \ref{soluble_local finit}.
\end{proof}

\section{Bounding the exponent of $\gamma_{k}(G)$}
\label{exponent}
The aim of this section is to prove Theorem \ref{333}. In the next lemma we call a subset $X$ of a group commutator-closed to mean that $[x,y]\in X$ whenever $x,y\in X$. It is clear that the set of all $\gamma_k$-values is commutator-closed in any group.
\begin{lemma}\label{751} Let $G$ be a nilpotent group generated by a commutator-closed subset $X$ which is contained in a union of finitely many subgroups $G_1,G_2,\dots,G_s$. Then $G$ can be written as the product $G=G_1G_2\cdots G_s$.
\end{lemma}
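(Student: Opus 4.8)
The plan is to induct on the nilpotency class $c$ of $G$. In the base case $c=1$ the group $G$ is abelian, so the product set $G_1G_2\cdots G_s$ is itself a subgroup; since it contains $X$ and $G=\langle X\rangle$, it must equal $G$, and we are done. For the inductive step I would pass to the quotient $\bar G=G/\gamma_c(G)$, which has class at most $c-1$. The image $\bar X$ of $X$ generates $\bar G$, it is again commutator-closed (the image of a commutator is the commutator of the images), and it is contained in $\bigcup_i \bar G_i$, where $\bar G_i$ denotes the image of $G_i$. By the inductive hypothesis $\bar G=\bar G_1\cdots \bar G_s$, which lifts to the factorization $G=G_1G_2\cdots G_s\,\gamma_c(G)$.

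It then remains to absorb the central factor $\gamma_c(G)$. Here I would use the standard fact that, for $G=\langle X\rangle$ of class $c$, the subgroup $\gamma_c(G)$ is generated by the left-normed commutators $[x_1,\dots,x_c]$ with $x_i\in X$; this holds because $\gamma_c(G)$ is central and hence the weight-$c$ commutator map is multilinear modulo $\gamma_{c+1}(G)=1$. The crucial observation is that commutator-closedness of $X$ forces each such generator to lie in $X$ itself: indeed $[x_1,x_2]\in X$, then $[[x_1,x_2],x_3]\in X$, and so on, so that $[x_1,\dots,x_c]\in X\subseteq\bigcup_i G_i$. Writing $Z_i=\gamma_c(G)\cap G_i$, every generator of $\gamma_c(G)$ lies in some $Z_i$, and since $\gamma_c(G)$ is abelian we obtain $\gamma_c(G)=Z_1Z_2\cdots Z_s$ with each $Z_i\le G_i$.

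Finally I would combine the two factorizations. From $G=G_1\cdots G_s\,\gamma_c(G)$ and $\gamma_c(G)=Z_1\cdots Z_s$ we get, for an arbitrary $g\in G$, an expression $g=g_1\cdots g_s z_1\cdots z_s$ with $g_i\in G_i$ and $z_i\in Z_i$. Since the $z_i$ are central they may be interleaved and absorbed, giving $g=(g_1z_1)(g_2z_2)\cdots(g_sz_s)$ with each $g_iz_i\in G_i$ (because $Z_i\le G_i$). Hence $g\in G_1G_2\cdots G_s$, and as the reverse inclusion is trivial, the lemma follows. I expect the only delicate point to be the reduction in the second paragraph: one must be sure that commutator-closedness really places all the weight-$c$ generators of $\gamma_c(G)$ inside $\bigcup_i G_i$, after which the centrality of $\gamma_c(G)$ renders the rearrangement routine.
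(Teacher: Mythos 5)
Your proposal is correct and follows essentially the same route as the paper: induction on the nilpotency class, using that $\gamma_c(G)$ is central and generated by weight-$c$ commutators in elements of $X$ (which lie in $X$ by commutator-closedness, hence in $\bigcup_i G_i$), writing $\gamma_c(G)=\prod_i(\gamma_c(G)\cap G_i)$, and absorbing these central factors into the $G_i$. Your write-up merely spells out in more detail the steps the paper compresses into ``It follows that $K=K_1K_2\cdots K_s$.''
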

\begin{proof} We use induction on the nilpotency class $c$ of $G$. If $G$ is abelian, the lemma is clear so we assume that $c$ is at least 2. Let $K=\gamma_c(G)$ and set $K_i=K\cap G_i$ for $i=1,2,\dots,s$. Then $K$ is central in $G$ and is generated by commutators in elements of $X$. It follows that $K=K_1K_2\cdots K_s$. By induction we assume that $$G=G_1G_2\cdots G_sK=G_1G_2\cdots G_sK_1K_2\cdots K_s.$$ Since the subgroups $K_1,K_2,\dots,K_s$ are central and $K_i\leq G_i$, we deduce that $G=G_1G_2\cdots G_s$, as required.
\end{proof}

Let $P$ be a Sylow $p$-subgroup of a finite group $G$. An immediate corollary of the Focal Subgroup Theorem \cite{go} is that $P\cap G'$ is generated by commutators. We do not know if $P\cap w(G)$ is generated by $w$-values for every outer commutator word $w$. A proof of the next related result can be found in \cite[Theorem A]{AFS}.  
\begin{theorem}\label{752} Let G be a finite group, $p$ a prime, and  $P$ a Sylow $p$-subgroup of $G$. If $w$ is an outer commutator word, then $P\cap w(G)$ is generated by  powers of $w$-values.
\end{theorem}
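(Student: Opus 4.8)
Write $W$ for the set of $w$-values in $G$ and put
$D=\langle g^{m}:\ g\in W,\ g^{m}\in P\rangle$,
the subgroup generated by those powers of $w$-values that happen to lie in $P$. Since every power of a $w$-value belongs to $w(G)$, the inclusion $D\le P\cap w(G)$ is immediate, so $D$ is a $p$-subgroup of $w(G)$; as $P\cap w(G)$ is a Sylow $p$-subgroup of the normal subgroup $w(G)$, proving the theorem amounts to showing that $D$ is a \emph{full} Sylow $p$-subgroup of $w(G)$, i.e. $|D|=|P\cap w(G)|$. Two observations frame the argument. First, the set $\hat W=\{g^{m}:g\in W\}$ of all powers of $w$-values is invariant under conjugation \emph{and} under inversion, whereas $W$ itself need not be inverse-closed for a general outer commutator word; this is exactly why the statement is phrased with powers rather than plain $w$-values. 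Second, the generators of $D$ are $p$-elements, so it suffices to exhibit inside $D$ the $p$-parts of enough $w$-values. As a routine first reduction I would pass to $G/O_{p'}(G)$: being a $p'$-group, $O_{p'}(G)$ meets $P$ trivially and meets each cyclic subgroup in a $p'$-group, so the conclusion for the quotient lifts back unchanged, and I may assume $O_{p'}(G)=1$.

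The main induction is on the weight $k$ of $w$. For $k=1$ we have $w(G)=G$ and $D=P$. For $k=2$, that is $w=\gamma_{2}=[x_{1},x_{2}]$, the assertion is precisely the classical Focal Subgroup Theorem: $P\cap G'$ is generated by the commutators $x^{-1}x^{g}$ with $x,x^{g}\in P$, and these are $w$-values lying in $P$ (here powers are not even needed). For $k\ge 2$ I write $w=[u,v]$ with $u,v$ outer commutator words of strictly smaller weight. A preliminary lemma, valid for outer commutator words, is that $w(G)=[u(G),v(G)]$: if $U$ and $V$ denote the conjugation-closed sets of $u$- and $v$-values, then each $[a,b]$ with $a\in U$, $b\in V$ is a $w$-value, while conversely every $[x,y]$ with $x\in u(G)$, $y\in v(G)$ expands, by the commutator calculus, into a product of conjugates and inverses of such $w$-values; hence $[u(G),v(G)]=w(G)$. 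Setting $A=u(G)$ and $B=v(G)$, the inductive hypothesis gives good control of the Sylow $p$-subgroups $P\cap A$ and $P\cap B$ of the two normal factors whose commutator is $w(G)$, namely they are generated by powers of $u$- and $v$-values lying in $P$.

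The crux is a focal subgroup theorem for the commutator $[A,B]$ of two normal subgroups $A,B\trianglelefteq G$, asserting that $P\cap[A,B]$ is generated by the $p$-parts of the commutators $[a,b]$ with $a\in P\cap A$, $b\in P\cap B$, corrected by the fusion of these $p$-elements in $G$. The plan is to reduce that fusion to the $p$-local subgroups $N_{G}(Q)$ via Alperin's fusion theorem and to treat each local contribution either by the classical Focal Subgroup Theorem or by induction on $|G|$; the correcting factors that appear are conjugates and inverses of commutators of powers of $u$- and $v$-values, and using the inverse- and conjugation-closure of $\hat W$ together with the multilinear shape of $w$ one rewrites them as powers of $w$-values lying in $P$. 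I expect this fusion bookkeeping to be the main obstacle. It genuinely cannot be bypassed: the naive hope that $P\cap w(G)$ is generated merely by the $p$-parts of $w$-values is false, since a group generated by $p'$-elements may still have order divisible by $p$ (for instance $A_{5}$ for $p=5$), so the focal corrections are indispensable, and the delicate task is to keep every one of them inside the subgroup generated by powers of $w$-values while feeding in the inductive data on $P\cap A$ and $P\cap B$.
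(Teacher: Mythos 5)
The paper does not actually prove this statement: it quotes it as Theorem A of \cite{AFS}, so there is no internal proof to compare against. Judged on its own, your plan has a genuine gap at exactly the point where the theorem is hard. The outer skeleton is sensible and in the spirit of the cited work --- induction on the weight, the identity $w(G)=[u(G),v(G)]$ for $w=[u,v]$, and the classical Focal Subgroup Theorem as the base case --- but the entire content of the inductive step is delegated to an unproved ``focal subgroup theorem for $[A,B]$'' whose statement you never make precise and whose proof you defer to ``fusion bookkeeping'' via Alperin's theorem, conceding yourself that this is the main obstacle. That bookkeeping is not routine, and there is a concrete obstruction the plan glosses over: the inductive hypothesis only gives that $P\cap u(G)$ and $P\cap v(G)$ are generated by \emph{powers} of $u$- and $v$-values, so the commutators $[a,b]$ with $a\in P\cap u(G)$, $b\in P\cap v(G)$ have $a$ and $b$ equal to products of such powers. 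Expanding $[a,b]$ by $[xy,z]=[x,z]^{y}[y,z]$ and $[x,yz]=[x,z][x,y]^{z}$ turns it into a product of conjugates of $w$-values, not into a power of a single $w$-value, and the $p$-part of a product of $p$-elements is not the product of their $p$-parts. Keeping these products and the fusion correction terms inside the subgroup generated by powers of $w$-values lying in $P$ \emph{is} the theorem; it does not follow from ``the multilinear shape of $w$''.

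A smaller but real error: your stated rationale for why powers appear --- that the set of $w$-values need not be inverse-closed --- is false. For any outer commutator word the set of values is closed under conjugation and under inversion, by induction on the weight using $[a,b]^{-1}=[a^{-1},b]^{a}$. The authors explicitly say they do not know whether $P\cap w(G)$ is generated by plain $w$-values; the need for powers is a genuine open issue, not an artifact of inversion. Your reduction to $O_{p'}(G)=1$ is harmless (lifting generators requires taking $p$-parts and conjugating by elements of $O_{p'}(G)$, which preserves the set of powers of $w$-values), but it does not advance the argument.
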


In this section the above theorem is applied in the case where $w$ is a lower central word. 

The routine inverse limit argument shows that Theorem \ref{333} can be easily deduced from the corresponding result for finite groups. Thus, we will deal with the case where $G$ is finite. In the proof of the next theorem we use a result from \cite{novar} whose proof is based on Zelmanov's Lie-theoretic techniques.

\begin{theorem}
\label{exponent gammak finite} Let $e$, $k$, $s$ be positive integers. Let $G$ be a finite group that has subgroups $G_1,G_2,\dots,G_s$ whose union contains all $\gamma_{k}$-values of $G$. Suppose that each subgroup $G_i$ has finite exponent dividing $e$. Then $\gamma_{k}(G)$ has finite exponent bounded by a function depending on $e$, $k$ and $s$ only.
\end{theorem}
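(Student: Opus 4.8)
The plan is to bound the exponent of $\gamma_k(G)$ one prime at a time. First I would use Theorem \ref{752} to determine which primes can occur in $|\gamma_k(G)|$. Since every $\gamma_k$-value lies in some $G_i$ and each $G_i$ has exponent dividing $e$, every $\gamma_k$-value has order dividing $e$. Now let $q$ be a prime not dividing $e$ and let $Q$ be a Sylow $q$-subgroup of $G$; as $\gamma_k(G)$ is normal in $G$, the intersection $Q\cap\gamma_k(G)$ is a Sylow $q$-subgroup of $\gamma_k(G)$, and by Theorem \ref{752} it is generated by powers of $\gamma_k$-values that are $q$-elements. But a $\gamma_k$-value has order dividing $e$, hence has trivial $q$-part, so the only such power is $1$ and $Q\cap\gamma_k(G)=1$. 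Thus every prime divisor of $|\gamma_k(G)|$ divides $e$, and since there are at most $\log_2 e$ of these, it suffices to bound, by an $(e,k,s)$-bounded number, the exponent of $P\cap\gamma_k(G)$ for each prime $p$ dividing $e$, where $P$ is a Sylow $p$-subgroup of $G$; multiplying the at most $\log_2 e$ resulting bounds then gives the exponent of $\gamma_k(G)$.

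Next I would fix such a prime $p$ and write $p^a$ for the exact power of $p$ dividing $e$. Applying Theorem \ref{752} once more, the $p$-group $Q:=P\cap\gamma_k(G)$ is generated by powers of $\gamma_k$-values that are $p$-elements; each such generator is the $p$-part of a $\gamma_k$-value of order dividing $e$, and so has order dividing $p^a$. Hence $Q$ is a finite $p$-group generated by elements of order dividing $p^a$. At this point the hypothesis has been reduced to the form required by the Lie-theoretic machinery: a $p$-group generated by elements of bounded order, arising from the commutator-closed set of $\gamma_k$-values, which is covered by the $s$ subgroups $G_i$ of exponent dividing $e$.

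The heart of the argument, and the step I expect to be the main obstacle, is to bound $\exp(Q)$ in terms of $e$, $k$ and $s$ alone; this is where Zelmanov's solution of the restricted Burnside problem enters through the result of \cite{novar}. The strategy is to pass to the associated Lie ring $L$ of the $p$-group $Q$, defined via the lower central $p$-series. Because $Q$ is generated by $p$-elements of order dividing $p^a$, the corresponding homogeneous generators of $L$ are ad-nilpotent of index bounded in terms of $p^a$, by the standard translation underlying Zelmanov's method. To deduce from Zelmanov's theorem that $L$ is nilpotent of bounded class, one needs $L$ to satisfy a polynomial identity whose parameters are bounded by the data; since a nilpotent group of bounded class generated by elements of bounded order has bounded exponent, this would immediately yield an $(e,k,s)$-bounded exponent for $Q$. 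I would obtain the required identity from the covering: Lemma \ref{751} shows that in the nilpotent setting the relevant subgroup is a product of the $s$ subgroups $Q\cap G_i$, each of exponent dividing $p^a$, and it is this product decomposition into $s$ pieces, together with the commutator-closedness of the set of $\gamma_k$-values, that constrains the Lie monomials in an $s$-bounded fashion and produces the identity. This is precisely the point at which the dependence on $s$ genuinely arises, and it is the delicate part that \cite{novar} supplies.

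Granting the result of \cite{novar}, $L$ is nilpotent of $(e,k,s)$-bounded class, so $Q$ has $(e,k,s)$-bounded exponent for each prime $p$ dividing $e$. Combining these bounds over the at most $\log_2 e$ relevant primes, as explained in the first paragraph, yields the desired $(e,k,s)$-bounded exponent of $\gamma_k(G)$ and completes the plan.
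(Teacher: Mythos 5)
Your first two paragraphs reproduce the paper's opening reduction correctly: pass to a Sylow $p$-subgroup of $\gamma_k(G)$, note via Theorem \ref{752} that it is generated by powers of $\gamma_k$-values, hence by elements of order dividing $e$, so that only the primes dividing $e$ matter and it suffices to bound the exponent of each such Sylow subgroup. (The paper adds one more small step here -- since $P$ is generated by elements of order dividing $e$, it suffices to bound $\exp(\gamma_k(P))$, which lets one assume from the start that $G$ itself is a $p$-group; you will need this too, because your covering hypothesis is about $\gamma_k$-values, not about arbitrary elements of $Q$.)

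The third paragraph, however, contains a genuine gap, and it is exactly at the step you flag as the heart of the argument. You propose to apply Zelmanov's machinery to the associated Lie ring of the whole Sylow subgroup $Q$. That cannot work as stated: Zelmanov's nilpotency theorem for Lie algebras generated by ad-nilpotent elements and satisfying a polynomial identity yields a nilpotency class that depends on the \emph{number of generators}, and $Q$ has no bounded generating set under the present hypotheses (there is no rank assumption in this theorem; a $p$-group generated by elements of order $p$ can have arbitrarily large exponent, so some further input is unavoidable). Moreover, your proposed source of the required polynomial identity -- the product decomposition $Q=(Q\cap G_1)\cdots(Q\cap G_s)$ -- is not a law and does not obviously yield one; you assert that it ``constrains the Lie monomials in an $s$-bounded fashion and produces the identity'' without any argument, and I do not see how to substantiate this. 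The idea you are missing is a localization: after reducing to a $p$-group and using Lemma \ref{751} to write $\gamma_k(G)=G_1G_2\cdots G_s$, the paper takes a \emph{single} element $g=g_1g_2\cdots g_s$ with $g_i\in G_i$ and passes to the subgroup $R=\langle g_1,\dots,g_s\rangle$. This $R$ is generated by $s$ elements of order dividing $e$, and every $\gamma_k$-value of $R$ is a $\gamma_k$-value of $G$, hence also has order dividing $e$; Theorem 7 of \cite{novar} (where the Zelmanov machinery lives, applied to a boundedly generated group) then bounds $|R|$, hence the order of $g$, in terms of $e$, $k$, $s$. Bounding each element's order separately in this way is what replaces your attempt to control all of $Q$ at once.
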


\begin{proof} Let $P$ be a Sylow $p$-subgroup of $\gamma_k(G)$. It is sufficient to show that the exponent of $P$ is $(e,k,s)$-bounded. Theorem \ref{752} shows that $P$ is generated by elements of order dividing $e$. Therefore it is sufficient to show that the exponent of $\gamma_k(P)$ is $(e,k,s)$-bounded. Thus, we may assume from the beginning that $G$ is a $p$-group. Further, without loss of generality we may assume that each subgroup $G_{i}$ is generated by $\gamma_{k}$-values and so $\gamma_{k}(G)=\langle G_{1}, G_{2}\ldots, G_{s} \rangle$. It follows from Lemma \ref{751} that $\gamma_{k}(G)=G_{1}G_{2}\cdots G_{s}$. We wish to prove that an arbitrary element of $\gamma_k(G)$ has bounded order. Thus, choose $g\in\gamma_{k}(G)$. We know that there exist elements $g_i\in G_i$ for $i=1,2,\dots,s$ such that $g=g_1g_2\cdots g_s$. Set $R=\langle g_1,g_2,\dots,g_s\rangle$. The subgroup $R$ is generated by $s$ elements, each of order dividing $e$ and every $\gamma_k$-value in $R$ is of order dividing $e$ as well. By \cite[Theorem 7]{novar} the order of $R$ is $(e,k,s)$-bounded. In particular, the order of $g$ is $(e,k,s)$-bounded, as required.
\end{proof}
The proof of Theorem \ref{333} is now complete.

\section{ Bounding the rank of a verbal subgroup}
\label{rank}

In this section we will prove  Theorems \ref{444} and \ref{555}. First we will show that Theorem \ref{444} holds under the additional assumption that $G$ contains an open soluble subgroup. 
\begin{proposition}
\label{soluble_rank} Let $G$ be a profinite group and $w$ any outer commutator word. Suppose that $G$ has finitely many subgroups of finite rank whose union contains all $w$-values in $G$.  If $G$ contains an open soluble subgroup, then the rank of $w(G)$ is finite.
\end{proposition}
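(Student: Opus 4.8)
The plan is to mirror the structure of the proof of Theorem \ref{222}, replacing ``periodic/locally finite'' by ``finite rank'' throughout, and running the same induction on the derived length of an open normal soluble subgroup. Let $d$ be the minimal derived length among open normal soluble subgroups of $G$. If $d=0$ then $G$ is finite and there is nothing to prove, so assume $d\geq 1$ and argue by induction on $d$. Choose an open normal soluble subgroup $K$ of derived length exactly $d$ and let $N$ be the last nontrivial term of its derived series, so that $N$ is abelian and normal in $G$. Let $M$ be the subgroup generated by all $w$-values lying in $N$, and let $G_1,\dots,G_s$ be the finitely many subgroups of finite rank covering the $w$-values. Writing $M_i=M\cap G_i$, each $M_i$ has finite rank, and since $M\leq N$ is abelian and $M=M_1\cdots M_s$ is a product of finitely many finite-rank abelian subgroups, $M$ itself has finite rank. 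Passing to $G/M$ we may assume $N$ contains no nontrivial $w$-values.

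By Lemma \ref{lemma nontrivial w-values} we then have $[N,w(G)]=1$, hence $w(G)\cap N\leq Z(w(G))$. The inductive hypothesis applied to $G/N$ gives that the image of $w(G)$ there has finite rank, so $w(G)/Z(w(G))$ has finite rank. At this point I need the analogue of Lemma \ref{schur} for rank in place of exponent: if $G$ is soluble-by-finite profinite and $w(G)/Z(w(G))$ has finite rank, then the derived subgroup of $w(G)$ has finite rank. This should follow from a profinite version of a result bounding the rank of $H'$ in terms of the rank of $H/Z(H)$ (the rank analogue of Mann's theorem used in Lemma \ref{schur}). Granting this, $w(G)'$ has finite rank, and passing to $G/w(G)'$ we reduce to the case where $w(G)$ is abelian. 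Then $w(G)=\prod_{i=1}^{s}\bigl(w(G)\cap G_i\bigr)$ is a product of finitely many finite-rank abelian subgroups, so $w(G)$ has finite rank, completing the induction.

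The main obstacle I anticipate is precisely the rank version of Lemma \ref{schur}: controlling the rank of the derived subgroup of $w(G)$ from the rank of its central quotient. In the exponent case this was handled cleanly by Mann's theorem on groups $K$ with $K/Z(K)$ of bounded exponent; for rank one expects a corresponding statement, but the reduction to abelian $w(G)$ relies essentially on having this intermediate step available in the profinite setting. A secondary technical point is ensuring that products of finitely many finite-rank subgroups retain finite rank; this is automatic for abelian groups, which is why I defer the product decomposition to the stage at which $w(G)$ has already been made abelian, exactly as in Proposition \ref{soluble_local finit}. If the required rank analogue of Lemma \ref{schur} is not directly available, the fallback is to invoke the Lubotzky--Mann theory of powerful $p$-groups \cite{lumann} to bound the rank of the relevant derived subgroups prime by prime, which the introduction already signals as the key tool for the finite-rank results.
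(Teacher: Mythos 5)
Your proposal follows the paper's proof essentially verbatim: the same induction on the derived length $d$, the same passage to $G/M$ and use of Lemma \ref{lemma nontrivial w-values} to get $w(G)\cap N\leq Z(w(G))$, and the same final reduction to abelian $w(G)$. The one ingredient you leave as a hypothesis --- the rank analogue of Lemma \ref{schur} --- is exactly what the paper supplies by citing Theorem 2.5 of \cite{kurda1995}: for a soluble-by-finite group $K$ with $K/Z(K)$ of finite rank, $K'$ has finite rank (with a quantitative bound), and its profinite version applied to $K=w(G)$ closes the gap, so your argument is complete and identical to the paper's.
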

\begin{proof} Arguing as in the proof of Proposition \ref{soluble_local finit} let us define $d$ to be the minimum of the derived lengths of the open normal soluble subgroups of $G$. If $d=0$, then $G$ is finite and there is nothing to prove. So we use induction on $d$ and assume that $d\geq 1$. In particular $G$ is infinite and choose an open normal soluble subgroup $H$ of $G$ such that the derived length of $H$ is precisely $d$.  Let $N$ be the last nontrivial term of the derived series of $H$ and let $M$ be the subgroup generated by all $w$-values that belong to $N$. Let $G_1,G_2,\dots,G_s$ be the finitely many subgroups of finite rank whose union contains all $w$-values in $G$. We have $M=M_1\cdots M_s$, where $M_i=M\cap G_i$ for $i=1,\ldots,s$.  Note that each subgroup $M_{i}$ is of finite rank, since $G_{i}$ has finite rank by the hypothesis. It follows that $M$ has finite rank as well. We pass to the quotient $G/M$ and assume that $N$ contains no nontrivial $w$-values. By Lemma \ref{lemma nontrivial w-values}  we have $w(G)\cap N\leq Z(w(G))$.  By induction on $d$ we  assume that the image of $w(G)$ in $G/N$ is of finite rank and so $w(G)/Z(w(G))$ is of finite rank. Theorem 2.5 in \cite{kurda1995} tells us that if $K$ is a soluble-–by-–finite group such that  $K/Z(K)$  has finite rank, then $K'$ has finite rank as well. Moreover the rank of $K'$ is bounded in terms of the derived length of the soluble radical of $K$, its index in $K$ and the rank of $K/Z(K)$. The profinite version of this result is straightforward and so we are in a position to apply it with $K=w(G)$. It follows that the rank of the derived group of $w(G)$ is finite. Thus, passing to the quotient $G/w(G)'$ we may assume that $w(G)$ is abelian. Then $w(G)$ is a product of  finitely many  subgroups of finite rank and we conclude that the rank of $w(G)$ is finite, as required.
\end{proof}  

We can now deal with the general case.  
\begin{proof}[Proof of Theorem \ref{444}]
Let $k$ be the weight of $w$. According to Lemma \ref{um} every $\delta_k$-value is also a $w$-value. For each integer $i=1,\dots,s$ we set 
\begin{equation*}
S_i=\Big\{(x_1,\dots,x_{2^k})\in \underbrace{G\times\dots\times G}_{2^k}\mid \delta_k(x_1,\dots,x_{2^k})\in G_i\Big\}.
\end{equation*} 
The sets $S_i$ are closed in $\underbrace{G\times\dots\times G}_{2^{k}}$ and cover  the group $\underbrace{G\times\dots\times G}_{2^{k}}$. By Baire's Category Theorem \cite[p.\ 200]{Kell} at least one of these sets contains a non-empty interior. Hence, there exist an open subgroup $H$ of $G$, elements $a_1,\dots,a_{2^k}$ in $G$ and an integer $j$ such that 
$$
\delta_k(a_1h_1,\dots,a_{2^k}h_{2^k})\in G_j  \text{ for all } h_1,\dots,h_{2^k}\in H.
$$
Without loss of generality we can assume that the subgroup $H$ is normal. Then it is clear that $H$ normalizes the set of all commutators of the form $\delta_k(a_1h_1,\dots,a_{2^k}h_{2^k})$, where $h_1,\dots,h_{2^k}\in H$. Let $K$ be the subgroup of $G$ generated by all commutators of the form $\delta_k(a_1h_1,\dots,a_{2^k}h_{2^k})$, where $h_1,\dots,h_{2^k}\in H$. Note that  $K\leq G_{j}$. Since the  rank of  $G_{j}$ is finite by the hypothesis, it follows that  also  $K$ has finite rank.  Let $D=K\cap H$. Then $D$ is a normal subgroup of finite rank in $H$ and the normalizer  of $D$ in $G$ has finite index. Therefore there are only finitely many conjugates of $D$ in $G$. Let $D=D_1,D_2,\dots,D_r$ be all these conjugates. All of them are normal in $H$ and so their product  $D_{1}D_{2}\cdots D_{r}$ has finite rank. By passing to the quotient $G/D_1D_2\cdots D_r$  we may assume that $D=1$.  Since $D=K\cap H$ and $H$ has finite index in $G$, it follows that $K$ is finite.  On the other hand,  the normalizer of $K$ has finite index in $G$ and so the normal closure, say $L$, of $K$ in $G$ is also finite. Thus we can pass to the quotient $G/L$ and assume that $K=1$. In this case we have $\delta_k(a_1h_1,\dots,a_{2^k}h_{2^k})=1$ for all $h_1,\dots,h_{2^k}\in H$. By Lemma \ref{22} the subgroup $H$ is soluble and so the theorem follows immediately from  Proposition \ref{soluble_rank}.
\end{proof}

We will now proceed to the proof of Theorem \ref{555}. 
The result can be easily deduced from the corresponding result on finite groups. Thus we will deal with the case where $G$ is a finite group. We will mention now some results about rank of a finite group. The following theorem is an immediate consequence of a result obtained independently by Guralnick \cite{Gu} and Lucchini \cite{Lu}.  It depends on the classification of finite simple groups. 
The result for soluble groups was obtained by Kov\'acs \cite{Ko}.

\begin{theorem}
\label{bound for rank}
Let $G$ be a finite group in which the rank of every Sylow subgroup is at most $r$. Then the rank of $G$ is at most $r+1$.
\end{theorem}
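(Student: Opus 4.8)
The plan is to reduce the statement about ranks to a statement about numbers of generators and then invoke the generation theorem of Guralnick \cite{Gu} and Lucchini \cite{Lu} cited just above. Recall that this theorem asserts the following: if $K$ is a finite group each of whose Sylow subgroups can be generated by $d$ elements, then $K$ itself can be generated by $d+1$ elements. This is the deep ingredient, and it rests on the classification of finite simple groups. Everything else in the argument is elementary Sylow theory. Throughout I write $d(H)$ for the minimal number of generators of a finite group $H$, so that the rank of $H$ is by definition the supremum of $d(L)$ over all subgroups $L \le H$.

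To bound the rank of $G$ by $r+1$, I must show that $d(H) \le r+1$ for every subgroup $H \le G$. So I would fix an arbitrary subgroup $H$ and examine its Sylow subgroups. Let $Q$ be a Sylow $p$-subgroup of $H$ for some prime $p$. Then $Q$ is a $p$-subgroup of $G$, so by Sylow's theorem it is contained in some Sylow $p$-subgroup $P$ of $G$. By hypothesis the rank of $P$ is at most $r$, and since $Q$ is a subgroup of $P$, it follows that $d(Q) \le r$. As $p$ was arbitrary, every Sylow subgroup of $H$ can be generated by at most $r$ elements.

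Applying the Guralnick--Lucchini theorem to $H$ then yields $d(H) \le r+1$. Since $H$ was an arbitrary subgroup of $G$, this is exactly the assertion that the rank of $G$ is at most $r+1$. The only point requiring genuine work is the generation theorem itself; once it is available the argument is immediate. Accordingly, there is no real obstacle on the elementary side beyond correctly passing from the Sylow subgroups of the auxiliary subgroup $H$ to those of the ambient group $G$, which is handled by the containment $Q \le P$ above.
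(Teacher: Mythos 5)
Your proof is correct and is essentially the paper's own argument: the paper simply declares the theorem an immediate consequence of the Guralnick--Lucchini generation theorem, and your reduction (every Sylow subgroup of an arbitrary $H\le G$ lies in a Sylow subgroup of $G$, hence needs at most $r$ generators, so $d(H)\le r+1$) is precisely the intended, standard derivation that the paper leaves unstated.
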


The next lemma essentially is due to Lubotzky and Mann \cite{lumann}. For the reader's convenience we give a proof here.
\begin{lemma}
\label{lemma product}
Let $P$ be a finite $p$-group and suppose that every term of the lower central series of $P$ can be generated by $d$ elements. Then the rank of $P$ is $d$-bounded.
\end{lemma}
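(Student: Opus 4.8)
The goal is to bound the rank of a finite $p$-group $P$ in terms of $d$, given that every term $\gamma_i(P)$ of the lower central series is $d$-generated. The plan is to invoke the Lubotzky--Mann theory of powerful $p$-groups, whose central feature is that a powerful $p$-group can be generated by exactly $d(P)$ elements where $d(P)=\dim_{\mathbb{F}_p}(P/\Phi(P))$, and crucially that the rank of a powerful $p$-group equals its minimal number of generators. So the strategy is to locate a powerful subgroup of $P$ of bounded index whose number of generators is controlled by $d$, and then control the rank of $P$ itself from the rank of this subgroup together with the index.

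First I would recall the basic fact from powerful $p$-group theory: there is a power $p^c$, with $c$ a $d$-bounded (indeed absolutely bounded) quantity, such that the subgroup $P^{p^c}$ (or an appropriate power-type subgroup) is powerful. Concretely, the Lubotzky--Mann results guarantee that $P$ has a powerful characteristic subgroup of index bounded in terms of the rank, but since we are trying to bound the rank, I would instead run the argument through the number of generators. The key input is that for a powerful $p$-group $Q$, we have $\mathrm{rk}(Q)=d(Q)$, and that $d$-generation is inherited well through powers. The hypothesis that every lower central term is $d$-generated should feed in here: I expect to use it to show that a suitable powerful subgroup, or the whole of a quotient like $P/\gamma_i(P)$ built from these terms, is $d$-generated, hence of $d$-bounded rank.

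The cleanest route I would try is to argue that $\gamma_2(P)=P'$ being $d$-generated, together with $P/P'$ being $d$-generated, already controls things, but this is not enough in general, which is exactly why the full lower central hypothesis is imposed. So I would instead proceed by relating $P$ to its associated graded Lie ring or, more elementarily, by finding a powerful subgroup. The standard Lubotzky--Mann estimate says that if the rank were $r$ then there is a powerful subgroup of index at most $p^{r^2}$ (or similar); conversely, a group possessing a chain of $d$-generated sections has bounded rank. The concrete step I would carry out is: pass to a powerful subgroup $N\trianglelefteq P$ of $p$-power index, show $N$ is $d'$-generated for some $d$-bounded $d'$ using the $d$-generation of the lower central terms, conclude $\mathrm{rk}(N)=d(N)$ is $d$-bounded, and finally bound $\mathrm{rk}(P)$ by $\mathrm{rk}(N)$ plus the number of generators of $P/N$, the latter being $d$-bounded since the index $[P:N]$ is $d$-bounded.

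The main obstacle I anticipate is establishing that the relevant powerful subgroup is $d$-generated (or that its index is $d$-bounded) purely from the hypothesis on the lower central terms, rather than from an a priori rank bound — since assuming a rank bound would be circular. The delicate point is to extract a powerful section of controlled index \emph{before} knowing the rank, using only the fact that each $\gamma_i(P)/\gamma_{i+1}(P)$ is $d$-generated. I expect the resolution to rely on the precise Lubotzky--Mann machinery relating generation of lower central factors to the existence of a uniformly powerful subgroup of bounded index, so that $\mathrm{rk}(P)$ is then pinned down by $d(P/\Phi(P))$ across the bounded-length chain; the bookkeeping of how the $d$-bounds on successive factors accumulate into a single $d$-bound for $\mathrm{rk}(P)$ is the step requiring genuine care.
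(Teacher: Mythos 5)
Your overall strategy -- find a powerful subgroup with boundedly many generators, use the fact that a powerful $p$-group has rank equal to its minimal number of generators, and then deal with the quotient -- is exactly the shape of the paper's argument. But the step you yourself flag as ``the delicate point'' is precisely the content of the lemma, and you do not supply it. Your two candidate routes to a powerful subgroup both fail as stated: there is no absolute (or merely $d$-bounded) $c$ such that $P^{p^c}$ is powerful for every $d$-generated finite $p$-group, and the powerful subgroup one eventually finds does \emph{not} have $d$-bounded index in $P$ (a $d$-generated $p$-group of bounded nilpotency class can still have arbitrarily large order), so your final step of bounding $\mathrm{rk}(P)$ via a bounded index $[P:N]$ would not go through. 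Saying that ``the resolution relies on the precise Lubotzky--Mann machinery'' without identifying which statement does the work leaves the proof with a genuine gap at its center.

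The missing idea in the paper is concrete: let $V$ be the intersection of the kernels of all homomorphisms $P\to GL_d(\mathbb{F}_p)$ and set $W=V$ for $p$ odd, $W=V^2$ for $p=2$; by Proposition 2.12 of Dixon--du Sautoy--Mann--Segal, \emph{any $d$-generated characteristic subgroup of $P$ contained in $W$ is powerful}. Since Sylow $p$-subgroups of $GL_d(\mathbb{F}_p)$ are nilpotent of class $d-1$, one gets $\gamma_d(P)\le V$, and a short computation (using that $\gamma_d(P)$ is $d$-generated, so its image in $P/W$ has order at most $2^d$ when $p=2$) gives $\gamma_{2d}(P)\le W$. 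Hence $\gamma_{2d}(P)$ is a $d$-generated powerful group, so it has rank at most $d$. The quotient $P/\gamma_{2d}(P)$ is then handled not by an index bound but by the chain of $2d$ sections $\gamma_i(P)/\gamma_{i+1}(P)$, each abelian and $d$-generated, hence each of rank at most $d$; this is where the full hypothesis on \emph{every} term of the lower central series is consumed, and it yields a $d$-bounded rank for $P/\gamma_{2d}(P)$ and therefore for $P$. Your proposal correctly identifies all the ingredients except this construction of $V$ and $W$, which is the one non-formal step.
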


\begin{proof} Let $V$ be the intersection of kernels of all homomorphisms of $P$ into $GL_{d}(\mathbb{F})$, where $\mathbb{F}$ is the field with $p$ elements. Set $W=V$ if $p$ is odd and $W=V^{2}$ if $p=2$. Then any characteristic $d$-generated subgroup of $P$ contained in $W$ is powerful by Proposition 2.12 in \cite{dsms}. Since the Sylow $p$-subgroups of  $GL_{d}(\mathbb{F})$ are nilpotent of class $d-1$, it follows that $\gamma_{d}(P) \leq V$. We know that $\gamma_{d}(P)$ is $d$-generated so the image of $\gamma_{d}(G)$ in $P/W$ has order at most $2^{d}$. Therefore $P/W$ is nilpotent of class at most $2d-1$ whence $\gamma_{2d}(P)\leq W$. Since $\gamma_{2d}(P)$ has at most $d$ generators, it becomes clear that $\gamma_{2d}(P)$ is powerful. Thus we conclude that $\gamma_{2d}(P)$ has rank at most $d$ \cite[Theorem 2.9]{dsms}. Since $P$ has at most $d$ generators, it is easy to see that the rank of $P/\gamma_{2d}(P)$ is $d$-bounded. The lemma follows.
\end{proof}

\begin{theorem}
\label{bounded rank finite} 
Let $k,r$ and $s$  be positive integers. Let $G$ be a finite group that has  subgroups $G_1,G_2,\dots,G_s$ whose union contains all $\gamma_{k}$-values of $G$. Suppose that each subgroup $G_i$ is of rank at most $r$. Then  the rank of $\gamma_{k}(G)$ is  bounded by a function depending on $k,r$ and $s$ only.
\end{theorem}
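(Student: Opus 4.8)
The plan is to follow the pattern of the proof of Theorem~\ref{exponent gammak finite}, replacing the bounds on exponents by bounds on ranks and substituting for the Lie-theoretic input the Lubotzky--Mann machinery recorded in Lemma~\ref{lemma product}. By Theorem~\ref{bound for rank} it suffices to bound, in terms of $k$, $r$ and $s$, the rank of an arbitrary Sylow $p$-subgroup $P$ of $\gamma_k(G)$; the rank of $\gamma_k(G)$ will then be at most one more than this bound. So I fix a prime $p$ and such a subgroup $P$.

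First I would pin down a small generating set of $P$. Since $P=P\cap\gamma_k(G)$, Theorem~\ref{752} shows that $P$ is generated by powers of $\gamma_k$-values. Each such power lies both in $\bigcup_i G_i$ and in $P$, hence in one of the subgroups $G_i\cap P$, and every $G_i\cap P$ has rank at most $r$. Therefore $P=\langle G_1\cap P,\dots,G_s\cap P\rangle$ can be generated by at most $sr$ elements, and the whole problem is reduced to a statement about the finite $p$-group $P$ whose $\gamma_k$-values are covered by the $s$ subgroups $G_i\cap P$ of rank at most $r$.

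The crucial point is to verify the hypothesis of Lemma~\ref{lemma product}, namely that every term $\gamma_j(P)$ of the lower central series of $P$ is generated by a $(k,r,s)$-bounded number of elements. Here I would exploit that the set $X$ of all $\gamma_k$-values of $P$ is closed under commutation and under conjugation, so that every iterated commutator of elements of $X$ again belongs to $X$, and in particular $[v,g]\in X$ whenever $v\in X$ and $g\in P$. An easy induction on $j$, based on the identity $\gamma_{j+1}(P)=[\gamma_j(P),P]$, then shows that for every $j\ge k$ the subgroup $\gamma_j(P)$ is generated by $\gamma_k$-values. Being generated by the commutator-closed set $X\cap\gamma_j(P)$, which lies in the union of the $s$ subgroups $G_i\cap\gamma_j(P)$ of rank at most $r$, the nilpotent group $\gamma_j(P)$ equals $(G_1\cap\gamma_j(P))\cdots(G_s\cap\gamma_j(P))$ by Lemma~\ref{751}; hence $\gamma_j(P)$ is generated by at most $sr$ elements for every $j\ge k$. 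The finitely many remaining terms with $j<k$ are handled by the fact that the minimal number of generators $d(\cdot)$ satisfies $d(\gamma_j(P))\le d(\gamma_j(P)/\gamma_{j+1}(P))+d(\gamma_{j+1}(P))$, since each factor $\gamma_j(P)/\gamma_{j+1}(P)$ is generated by the images of the weight-$j$ commutators in a fixed set of at most $sr$ generators of $P$, a number depending only on $k$, $r$ and $s$. Combining the two cases, every $\gamma_j(P)$ is generated by a single $(k,r,s)$-bounded number $d$ of elements, so Lemma~\ref{lemma product} bounds the rank of $P$ and Theorem~\ref{bound for rank} completes the argument.

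I expect the main obstacle to lie in the uniformity demanded by Lemma~\ref{lemma product}: one needs a single bound on $d(\gamma_j(P))$ valid for all $j$, whereas the nilpotency class of $P$ is in no way controlled by $k$, $r$ and $s$. The feature that rescues the argument is the commutator-closedness of the $\gamma_k$-values, which forces every high term $\gamma_j(P)$ with $j\ge k$ to be a product of merely $s$ subgroups of rank at most $r$, yielding the $j$-independent bound $sr$; only the $k-1$ low terms require a separate and harmless count.
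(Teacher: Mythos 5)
Your proposal is correct and follows essentially the same route as the paper: Theorem~\ref{752} gives that a Sylow $p$-subgroup $P$ of $\gamma_k(G)$ is generated by the intersections $P\cap G_i$ and hence by at most $rs$ elements, the terms $\gamma_j(P)$ with $j\ge k$ are handled via the covering hypothesis and those with $j<k$ via the generating set of $P$, and then Lemma~\ref{lemma product} and Theorem~\ref{bound for rank} finish the argument. Your appeal to Lemma~\ref{751} is harmless but unnecessary (generation by the $s$ intersections of rank at most $r$ already yields the $rs$ bound), and you actually spell out the $j<k$ bookkeeping more explicitly than the paper does.
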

\begin{proof} We remark that our hypothesis implies that if $H$ is a subgroup of $G$ generated by the intersections $H\cap G_i$, then $H$ can be generated by at most $rs$ elements.
Hence, for every $j\geq k$ and every subgroup $U$ of $G$ the corresponding term of the lower central series $\gamma_j(U)$ can be generated by at most $rs$ elements. It follows from Lemma \ref{lemma product} that every $p$-subgroup with $rs$ generators has $\{k,r,s\}$-bounded rank. Now let $P$ be a Sylow $p$-subgroup of $\gamma_k(G)$. Theorem \ref{752} tells us that  $P$ is generated by powers of $\gamma_{k}$-values. Obviously the powers are contained in the union of the subgroups $G_{i}$. Hence $P$ can be generated by at most $rs$ elements and so the rank of $P$ is $\{k,r,s\}$-bounded. Now the theorem is immediate from Theorem \ref{bound for rank}.
\end{proof}
The proof of Theorem \ref{555} is now complete.


\end{document}